\documentclass{article}

\usepackage{tabularx}
\usepackage{graphicx}
\usepackage{fullpage}
\usepackage{hyperref}
\usepackage{natbib}
\usepackage{delarray}
\usepackage{times}
\usepackage{subfigure}
\usepackage{color}
\usepackage{amssymb}
\usepackage{amsmath}
\usepackage{tikz}
\usepackage{xspace}
\usepackage{enumerate}
\usepackage{algorithm}
\usepackage{algorithmic}
\usepackage{multirow}
\usepackage{nameref}
\usepackage{booktabs}
\usepackage{mdframed}
\usepackage{fmtcount}
\usepackage{footnote}
\makesavenoteenv{table}
\makesavenoteenv{tabular}
\makesavenoteenv{algorithm}
\makesavenoteenv{algorithmic}
\makesavenoteenv{figure}

\newcommand{\eat}[1]{}
\newcommand{\topic}[1]{\vspace{5pt}\noindent{{\bf #1:}}}

\newcommand{\R}{{\mathbb R}}

\newcommand{\E}{{\mathbb{E}}}

\newcommand{\calG}{{\mathcal G}}
\newcommand{\prox}{\mathrm{prox}}
\newcommand{\red}[1]{\textcolor{red}{#1}}
\newcommand{\blue}[1]{\textcolor{blue}{#1}}

\newcommand{\inner}[2]{\langle #1, #2 \rangle}

\newcommand{\ns}[1]{\| #1 \|^2}
\newcommand{\nl}[1]{\| #1 \|_1}
\newcommand{\n}[1]{\| #1 \|}
\newcommand{\hx}{\hat{x}}

\newcommand{\tx}{\widetilde{x}}

\newcommand{\bx}{\bar{x}}

\newenvironment{proof}{\noindent {\em Proof: }\ignorespaces}%
                {\hspace*{\fill}$\Box$\par}
        {\hspace*{\fill}$\Box$\par\vspace{4mm}}
\newenvironment{proofof}[1]{\smallskip\noindent{\bf Proof of #1.}}%
        {\hspace*{\fill}$\Box$\par}

\newtheorem{theorem}{Theorem}
\newtheorem{lemma}{Lemma}

\newtheorem{definition}{Definition}
\newtheorem{assumption}{Assumption}

\hypersetup{
    colorlinks=true,       
    linkcolor=black,    
    citecolor=black,        
    filecolor=magenta,     
    urlcolor=blue
}

\begin{document}

\title{A Simple Proximal Stochastic Gradient Method for Nonsmooth Nonconvex Optimization\footnote{32nd Conference on Neural Information Processing Systems
      (NeurIPS 2018), Montr\'{e}al, Canada}}
\author{Zhize Li  \qquad \qquad \qquad
        Jian Li \\
      Institute for Interdisciplinary Information Sciences, Tsinghua University
      \\ zz-li14@mails.tsinghua.edu.cn \quad
        lijian83@mail.tsinghua.edu.cn}

\date{}
\maketitle

\begin{abstract}
We analyze stochastic gradient algorithms for optimizing nonconvex, nonsmooth finite-sum problems. In particular, the objective function is given by the summation of a differentiable (possibly nonconvex) component, together with a possibly non-differentiable but convex component.
We propose a proximal stochastic gradient algorithm based on variance reduction, called ProxSVRG+.
Our main contribution lies in the analysis of ProxSVRG+.
It recovers several existing convergence results and improves/generalizes them (in terms of the number of stochastic gradient oracle calls and proximal oracle calls).
In particular, ProxSVRG+ generalizes the best results given by the SCSG algorithm, recently proposed by \citep{lei2017non} for the smooth nonconvex case.
ProxSVRG+ is also more straightforward than SCSG and yields simpler analysis.
Moreover, ProxSVRG+ outperforms the deterministic proximal gradient descent (ProxGD) for a wide range of minibatch sizes, which partially solves an open problem proposed in \citep{reddi2016proximal}.
Also, ProxSVRG+ uses much less proximal oracle calls than ProxSVRG \citep{reddi2016proximal}.
Moreover, for nonconvex functions satisfied Polyak-\L{}ojasiewicz condition, we prove that ProxSVRG+ achieves a global linear convergence rate without restart unlike ProxSVRG.
Thus, it can \emph{automatically} switch to the faster linear convergence in some regions as long as the objective function satisfies the PL condition locally in these regions.
ProxSVRG+ also improves ProxGD and ProxSVRG/SAGA, and generalizes the results of SCSG in this case.
Finally, we conduct several experiments and the experimental results are consistent with the theoretical results.
\end{abstract}

\section{Introduction}
\label{sec:intro}
In this paper, we consider nonsmooth nonconvex finite-sum optimization problems of the form
\begin{equation}\label{eq:form}
\min_{x}\Phi(x) := f(x)+h(x),
\end{equation}
where $f(x) := \frac{1}{n}\sum_{i=1}^n{f_i(x)}$ and each $f_i(x)$ is possibly nonconvex with a Lipschitz continuous gradient,
while $h(x)$ is nonsmooth but convex (e.g., $l_1$ norm $\nl{x}$ or indicator function $I_C(x)$ for some convex set $C$).
We assume that the proximal operator of $h(x)$ can be computed efficiently.

This above optimization problem
is fundamental to many machine learning problems, ranging from
convex optimization such as Lasso, SVM to highly nonconvex problem such as optimizing deep neural networks.
There has been extensive research when $f(x)$ is convex
(see e.g.,~\citep{xiao2014proximal,defazio2014saga,lan2015optimal,allen2017katyusha}).
In particular, if $f_i$s are strongly-convex, \cite{xiao2014proximal} proposed the Prox-SVRG algorithm, which achieves
a linear convergence rate,
based on the well-known variance reduction technique SVRG developed in
\citep{johnson2013accelerating}.
In recent years,
due to the increasing popularity of deep learning,
the nonconvex case has attracted significant attention.
See e.g.,~\citep{ghadimi2013stochastic,allen2016variance,reddi2016stochastic,lei2017non}
for results on
the smooth nonconvex case (i.e., $h(x)\equiv 0$).
Very recently, \cite{zhou2018stochastic} proposed an algorithm with stochastic gradient complexity $\widetilde{O}(\frac{1}{\epsilon^{3/2}}\wedge \frac{n^{1/2}}{\epsilon})$, improving the previous results $O(\frac{1}{\epsilon^{5/3}})$ \citep{lei2017non} and $O(\frac{n^{2/3}}{\epsilon})$ \citep{allen2016variance}.
For the more general \emph{nonsmooth nonconvex} case,
the research is still somewhat limited.

Recently, for the nonsmooth nonconvex case, \cite{reddi2016proximal} provided two
algorithms called ProxSVRG and ProxSAGA, which are based on the well-known variance reduction techniques SVRG and SAGA \citep{johnson2013accelerating,defazio2014saga}.
Also, we would like to mention that \cite{aravkin2016smart}
considered the case when $h$ can be nonconvex in a more general context of robust optimization.
Before that, \cite{ghadimi2016mini} analyzed the deterministic proximal gradient method (i.e., computing the full-gradient in every iteration) for nonconvex nonsmooth problems. Here we denote it as ProxGD.
\cite{ghadimi2016mini} also considered the stochastic case (here we denote it as ProxSGD).
However, ProxSGD requires the batch sizes being a large number (i.e., $\Omega(1/\epsilon)$) or increasing with the iteration number $t$.
Note that ProxSGD may reduce to deterministic ProxGD after some iterations
due to the increasing batch sizes.
Note that from the perspectives of both computational efficiency and
statistical generalization, always computing full-gradient (GD or ProxGD) may not be desirable for large-scale machine learning problems.
A reasonable minibatch size is also desirable in practice, since the computation of minibatch stochastic gradients can be implemented in parallel.
In fact, practitioners typically use moderate minibatch sizes,
often ranging from something like 16 or 32 to a few hundreds (sometimes to a few thousands, see e.g., \citep{goyal2017accurate}).\footnote{In fact, some studies argued that smaller minibatch sizes
	in SGD are very useful for generalization
	(e.g., \citep{keskar2016large}).
	Although generalization is not the focus of the present paper,
	it provides further motivation for studying the moderate
	minibatch size regime.
}
Hence, it is important to study the convergence
in moderate and constant minibatch size regime.

\cite{reddi2016proximal} provided the first non-asymptotic convergence rates for
ProxSVRG with minibatch size at most $O(n^{2/3})$, for the nonsmooth nonconvex problems.
However, their convergence bounds (using constant or moderate size minibatches)
are worse than the deterministic ProxGD in terms of the number of proximal oracle calls.
Note that their algorithms (i.e., ProxSVRG/SAGA) outperform the ProxGD only if they use quite large minibatch size $b=O(n^{2/3})$.
Note that in a typical application, the number of training data is $n=10^6\sim 10^9$, and $n^{2/3}=10^4\sim 10^6$.
Hence, $O(n^{2/3})$ is a quite large minibatch size.
Finally, they presented an important open problem of
\emph{developing stochastic methods with provably better performance than ProxGD with constant minibatch size}.

\vspace{-1mm}
\topic{Our Contribution} In this paper, we propose a very straightforward algorithm called ProxSVRG+ to solve the nonsmooth nonconvex problem \eqref{eq:form}.
Our main technical contribution lies in the new convergence
analysis of ProxSVRG+, which has notable difference from that of ProxSVRG \citep{reddi2016proximal}.
We list our results in Table \ref{tab:1}--\ref{tab:3} and Figure \ref{fig:1}--\ref{fig:2}.
Our convergence results are stated in terms of the number of
stochastic first-order oracle (\emph{SFO}) calls and proximal oracle (\emph{PO}) calls (see Definition \ref{dfn:oracle}).
We would like to highlight the following results
yielded by our new analysis:

\vspace{-1.5mm}
\begin{enumerate}[1)]
  \item
      ProxSVRG+ is $\sqrt{b}$ (resp. $\sqrt{b}\epsilon n$) times faster than
      ProxGD in terms of \#SFO when $b\leq n^{2/3}$
      (resp. $b\leq 1/\epsilon^{2/3}$),
      and $n/b$ times faster than
      ProxGD when $b>n^{2/3}$ (resp. $b>1/\epsilon^{2/3}$).
      Note that \#PO $=O(1/\epsilon)$ for both ProxSVRG+ and ProxGD.
      Obviously, for any super constant $b$,
      ProxSVRG+ is strictly better than ProxGD.
      Hence, we partially answer the open question (i.e. developing stochastic methods with provably better performance than ProxGD with constant minibatch size $b$) proposed in
      \citep{reddi2016proximal}.
      ProxSVRG+ also matches the
      best result achieved by ProxSVRG at $b=n^{2/3}$,
      and it is strictly better for smaller $b$ (using less
      PO calls). See Figure \ref{fig:1} for an overview.

  \item
      Assuming that the variance of the stochastic gradient is bounded (see Assumption \ref{asp:var}), i.e. online/stochastic setting,
      ProvSVRG+ generalizes the best result achieved by SCSG, recently proposed by \citep{lei2017non} for the smooth nonconvex case, i.e., $h(x)\equiv 0$ in form (\ref{eq:form})
      (see Table \ref{tab:1}, the 5th row).
      ProxSVRG+ is more straightforward than SCSG
      and yields simpler proof.
      Our results also match the results of
      Natasha1.5 proposed by \citep{allen2017natasha} very recently,
      in terms of \#SFO, if there is no additional assumption
      (see Footnote \ref{foot:natasha} for details).
      In terms of \#PO, our algorithm outperforms Natasha1.5.

      We also note that SCSG \citep{lei2017non} and ProxSVRG \citep{reddi2016proximal} achieved their best convergence results with $b=1$ and $b=n^{2/3}$ respectively,
      while ProxSVRG+ achieves the best result with $b=1/\epsilon^{2/3}$ (see Figure \ref{fig:1}), which is a moderate minibatch size
      (which is not too small for parallelism/vectorization and not too large for better generalization).
      In our experiments, the best $b$ for ProxSVRG and ProxSVRG+ in the MNIST experiments is 4096 and 256, respectively (see the second row of Figure \ref{fig:n4}).

  \item
      For the nonconvex functions satisfying	
      Polyak-\L{}ojasiewicz condition \citep{polyak1963gradient},
      we prove that ProxSVRG+ achieves a global linear convergence rate \emph{without restart},
      while \cite{reddi2016proximal} used PL-SVRG to restart ProxSVRG many times to obtain the linear convergence rate. Thus, ProxSVRG+ can \emph{automatically} switch to the faster linear convergence in some regions.
      ProxSVRG+ also improves ProxGD and ProxSVRG/SAGA, and generalizes the results of SCSG in this case (see Table \ref{tab:3}). Also see the remarks after Theorem \ref{thm:pl1} for more details.
\end{enumerate}

\begin{table}[!htb]
\vspace{-5mm}
\centering
\caption{Comparison of the SFO and PO complexity}
\vspace{1.5mm}
\label{tab:1}
    \begin{tabular}{|c|c|c|c|}
	\hline
	\multirow{2}*{Algorithms}  & Stochastic first-order & Proximal oracle   & Additional \\
			& oracle (SFO)         &  (PO)                   & condition\\ \hline
			
	ProxGD~\citep{ghadimi2016mini}    & \multirow{2}*{$O(n/\epsilon)$} & \multirow{2}*{$O(1/\epsilon)$}
		      & \multirow{2}*{--} \\
	 (full gradient) &&& \\\hline
			
	\multirow{2}*{ProxSGD~\citep{ghadimi2016mini}}
	& \multirow{2}*{$O(b/\epsilon)$}
	& \multirow{2}*{$O(1/\epsilon)$} &$\sigma =O(1),$ \\
	&&& $b\ge 1/\epsilon$\\\hline

	\multirow{2}*{ProxSVRG/SAGA~\citep{reddi2016proximal}} &
            \multirow{2}*{$O\big(\frac{n}{\epsilon\sqrt{b}}+n\big)$}
			& \multirow{2}*{$O\big(\frac{n}{\epsilon b^{3/2}}\big)$}
			& \multirow{2}*{$b\le n^{2/3}$} \\ &&&  \\\hline
			
    SCSG~\citep{lei2017non}
    &\multirow{3}*{$O\Big(\frac{b^{1/3}}{\epsilon}\big(n\wedge\frac{1}{\epsilon}\big)^{2/3}\Big)$}
			&\multirow{3}*{NA}& \multirow{3}*{$\sigma = O(1)$} \\
			(smooth nonconvex, &&&\\
			i.e., $h(x)\equiv 0$ in (\ref{eq:form})) &&& \\\hline
			
	Natasha1.5~\citep{allen2017natasha} & $O(1/\epsilon^{5/3})$\,\,\footnotemark
	 &  $O(1/\epsilon^{5/3})$ & $\sigma = O(1)$ \\\hline

	& \multirow{2}*{$O\big(\frac{n}{\epsilon\sqrt{b}}+\frac{b}{\epsilon}\big)$}
		& \multirow{2}*{$O(1/\epsilon)$}   & \multirow{2}*{--} \\
		\red{ProxSVRG+} &&& \\\cline{2-4}
		(this paper)
			&\multirow{2}*{$O\Big(\big(n\wedge\frac{1}{\epsilon}\big)\frac{1}{\epsilon\sqrt{b}}
				+\frac{b}{\epsilon}\Big)$}
			& \multirow{2}*{$O(1/\epsilon)$}  & \multirow{2}*{$\sigma = O(1)$} \\
			&&& \\\hline
	\end{tabular}
		
	\vspace{1mm}
	{\small The notation $\wedge$ denotes the minimum and $b$ denotes the minibatch size.
			The definitions of SFO and PO are given \\ in Definition \ref{dfn:oracle}, and $\sigma$ (in the last column) is defined in Assumption \ref{asp:var}.}
\end{table}
\footnotetext{Natasha 1.5 used an additional parameter,
called strongly nonconvex parameter $\widetilde{L}$ ($\widetilde{L}\leq L$)
and \#SFO in~\citep{allen2017natasha} is
$O(\frac{1}{\epsilon^{3/2}}+\frac{\widetilde{L}^{1/3}}{\epsilon^{5/3}})$.
If $\widetilde{L}$ is much smaller than $L$, the bound is better.
Without any additional assumption, the default value of $\widetilde{L}$ is $L$.
The result listed in the table is the $\widetilde{L}=L$ case.
Besides, one can verify that  \#PO of Natasha1.5 is the same as its \#SFO.\label{foot:natasha}}

\begin{table}[!htb]
\vspace{-7mm}
\centering
\caption{Some recommended minibatch sizes $b$}
\vspace{1.5mm}
\label{tab:2}
    \begin{tabular}{|c|c|c|c|c|c|}
	   \hline
	   Algorithm & Minibatches& SFO & PO  & Addi. cond. & Notes\\ \hline
			
		\multirow{11}*{\red{ProxSVRG+}}
			&\multirow{2}*{$b=1$}
			&$O(n/\epsilon)$ & $O(1/\epsilon)$ &-- & Same as ProxGD\\\cline{3-6}
			& & $O(1/\epsilon^2)$ & $O(1/\epsilon)$ & $\sigma = O(1)$ & Same as ProxSGD \\\cline{2-6}
			
			&\multirow{5}*{$b=\frac{1}{\epsilon^{2/3}}$}
			&\multirow{2}*{\red{$O\big(\frac{n}{\epsilon^{2/3}} + \frac{1}{\epsilon^{5/3}}\big)$}}
			& \multirow{2}*{\red{$O(1/\epsilon)$}}
			& \multirow{2}*{--} & \red{Better than ProxGD,} \\
			&&&&& \red{does not need $\sigma = O(1)$}\\\cline{3-6}
			
			& &\multirow{3}*{\blue{$O\big(\frac{1}{\epsilon^{5/3}}\big)$}}&
               \multirow{3}*{\blue{$O(1/\epsilon)$}}
			&&\blue{Better than ProxGD and}\\ &&&&$\sigma = O(1),$&\blue{ProxSVRG/SAGA,}\\
			&&&& \blue{$n>1/\epsilon$} &\blue{same as SCSG (in SFO)}\\\cline{2-6}
			
			&\multirow{2}*{$b=n^{2/3}$}
			&\multirow{2}*{$O\big(\frac{n^{2/3}}{\epsilon}\big)$} & \multirow{2}*{$O(1/\epsilon)$}
			& \multirow{2}*{--} &Same as\\ &&&& & ProxSVRG/SAGA\\\cline{2-6}
			
			&$b=n$ &$O(n/\epsilon)$ & $O(1/\epsilon)$ &--  &Same as ProxGD\\\hline
    \end{tabular}
\end{table}

\begin{figure}[!htb]
\vspace{-6.5mm}
\centering
	\begin{minipage}[htb]{0.5\textwidth}
		\centering
		\includegraphics[width=0.82\textwidth]{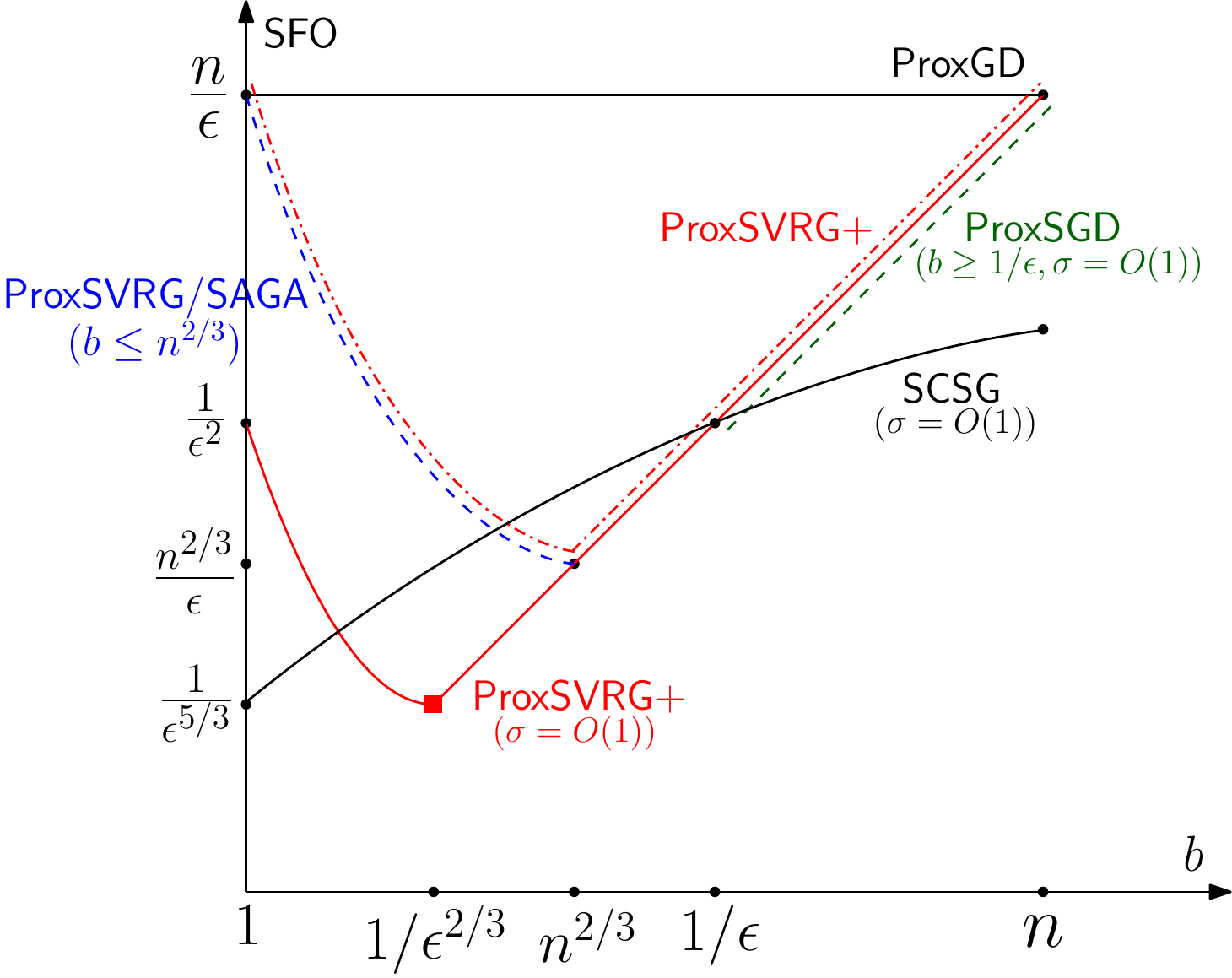}
		\vspace{-4mm}
		\caption{\small SFO complexity in terms of minibatch size $b^{\,\,3}$}
		\label{fig:1}
	\end{minipage}\hspace{5pt}
	\begin{minipage}[htb]{0.45\textwidth}
		\centering
		\vspace{1.5mm}
		\includegraphics[width=0.7\textwidth]{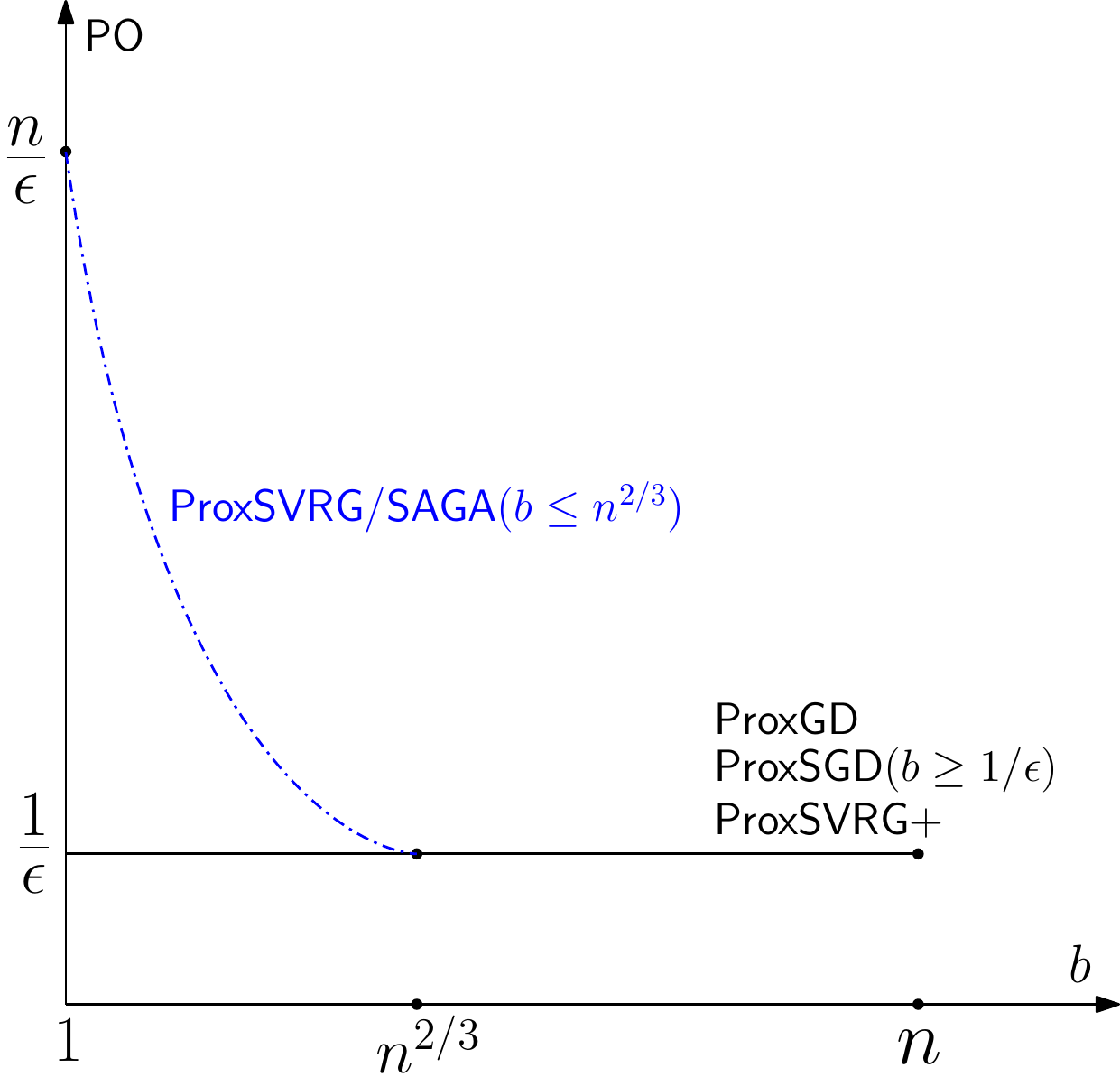}
		\vspace{-1.9mm}
		\caption{\small PO complexity in terms of minibatch size $b$}
		\label{fig:2}
	\end{minipage}
	\vspace{-3.5mm}
\end{figure}
\footnotetext[3]{Note that the curve of ProxSGD overlaps with ProxSVRG+ for $b\geq 1/\epsilon$, and the curve of ProxSVRG/SAGA overlaps with ProxSVRG+ for $b\leq n^{2/3}$ in Figure \ref{fig:1}.
We did not plot Natasha 1.5 since it did not consider the minibatch case, i.e., $b\equiv 1$ in Natasha 1.5.
}\setcounter{footnote}{3}

\section{Preliminaries}
\label{sec:pre}
We assume that $f_i(x)$ in (\ref{eq:form}) has an $L$-Lipschitz continuous gradient for all $i\in[n]$, i.e., there is a constant $L$ such that
\begin{equation}\label{eq:smooth}
\n{\nabla f_i(x)-\nabla f_i(y)}\leq L\n{x-y},
\end{equation}
where $\n{\cdot}$ denotes the Eculidean norm $\|\cdot\|_2$.
Note that $f_i(x)$ does not need to be convex.
We also assume that the nonsmooth convex function $h(x)$ in (\ref{eq:form}) is well structured, i.e.,
the following proximal operator on $h$ can be computed efficiently:
\begin{equation}\label{eq:prox}
	\prox_{\eta h}(x) := \arg\min_{y\in\R^d}{\Big(h(y)+\frac{1}{2\eta}\ns{y-x}\Big)}.
\end{equation}
For convex problems, one typically uses the optimality gap $\Phi(x)-\Phi(x^*)$ as the convergence criterion (see e.g., \citep{nesterov2014introductory}).
But for general nonconvex problems, one typically uses the gradient norm as the convergence criterion.
E.g., for smooth nonconvex problems (i.e., $h(x)\equiv 0$), \cite{ghadimi2013stochastic}, \cite{reddi2016stochastic} and \cite{lei2017non} used $\ns{\nabla\Phi(x)}$ (i.e., $\ns{\nabla f(x)}$) to measure the convergence results.
In order to analyze the convergence results for \emph{nonsmooth} nonconvex problems, we need to define the \emph{gradient mapping}
as follows (as in \citep{ghadimi2016mini,reddi2016proximal}):
\begin{equation}\label{eq:gradmap}
\calG_\eta(x) := \frac{1}{\eta}\Big(x - \prox_{\eta h}\big(x-\eta\nabla f(x)\big)\Big).
\end{equation}
We often use an equivalent but useful form of
$\prox_{\eta h}\big(x-\eta\nabla f(x)\big)$ as follows:
\begin{equation}\label{eq:prox2}
 \prox_{\eta h}\big(x-\eta\nabla f(x)\big) = \arg\min_{y\in\R^d}{\Big(h(y)+\frac{1}{2\eta}\ns{y-x}+\inner{\nabla f(x)}{y}\Big)}.
\end{equation}

Note that if $h(x)$ is a constant function (in particular, zero),
this gradient mapping reduces to  the ordinary gradient:
$\calG_\eta(x) = \nabla\Phi(x) = \nabla f(x)$.
In this paper, we use the gradient mapping $\calG_\eta(x)$
as the convergence criterion (same as \citep{ghadimi2016mini,reddi2016proximal}).
\begin{definition}
\label{dfn:eps}
$\hat{x}$ is called an $\epsilon$-accurate solution for problem (\ref{eq:form})
if $\E[\ns{\calG_\eta(\hx)}]\leq \epsilon$,
where $\hat{x}$ denotes the point returned by a stochastic algorithm.
\end{definition}
Note that the metric $\calG_\eta(x)$ has already normalized the step-size $\eta$, i.e., it is independent of different algorithms. Also it is indeed a convergence metric for $\Phi(x) = f(x)+h(x)$.
Let $x^+:=\prox_{\eta h}\big(x-\eta\nabla f(x)\big)$, then $\calG_\eta(x) := \frac{1}{\eta}\big(x - x^+\big)$.
If $\|\calG_\eta(x)\|=\frac{1}{\eta}\|x - x^+\|=\|\nabla f(x) + \partial h(x^+)\|\leq \epsilon$, then $\|\partial \Phi(x^+)\|=\|\nabla f(x^+) + \partial h(x^+)\|\leq L\|x-x^+\|+\|\nabla f(x) + \partial h(x^+)\|\leq L\eta\epsilon +\epsilon=O(\epsilon)$. Thus the next iteration point $x^+$ is an $\epsilon$-approximate stationary solution for the objection function $\Phi(x) = f(x)+h(x)$.

To measure the efficiency of a stochastic algorithm, we use the following oracle complexity.
\begin{definition}\label{dfn:oracle}
\begin{enumerate}[(1)]
  \item Stochastic first-order oracle (SFO): given a point $x$, SFO outputs a stochastic gradient $\nabla f_i(x)$ such that $\E_{i\sim [n]}[\nabla f_i(x)]=\nabla f(x)$.
  \item Proximal oracle (PO): given a point $x$, PO outputs
  the result of the proximal projection $\prox_{\eta h}(x)$ (see (\ref{eq:prox})).
\end{enumerate}
\end{definition}

Sometimes, the following assumption
on the variance of the stochastic gradients is needed
(see the last column ``additional condition" in Table \ref{tab:1}).
Such an assumption is necessary if one wants the convergence result
to be independent of $n$.
People also denote this case as the online/stochastic setting, in which the full gradient is not available (see e.g., \citep{allen2017natasha,lan2018random}).

\begin{assumption}\label{asp:var}
For $\forall x$, $\E[\ns{\nabla f_i(x)-\nabla f(x)}]\leq \sigma^2$,
where $\sigma>0$ is a constant and $\nabla f_i(x)$ is a stochastic gradient.
\end{assumption}

\section{Nonconvex ProxSVRG+ Algorithm}
\label{sec:alg}
In this section, we propose a proximal stochastic gradient algorithm called ProxSVRG+, which is very straightforward (similar to nonconvex ProxSVRG \citep{reddi2016proximal} and convex Prox-SVRG \citep{xiao2014proximal}).
The details are described in Algorithm \ref{alg:1}.
We call $B$ the batch size and $b$ the minibatch size.

\newpage
\begin{algorithm}[!t]
	\caption{Nonconvex ProxSVRG+}
	\label{alg:1}
	\begin{algorithmic}[1]
		\REQUIRE 
		initial point $x_0$, batch size $B$, minibatch size $b$, epoch length $m$, step size $\eta$

		\STATE $\tx^0 = x_0$
        \FOR{$s=1, 2, \ldots, S$}
        \STATE $x_0^s=\tx^{s-1}$
        \STATE
          $g^s=\frac{1}{B}\sum_{j\in I_B}\nabla f_j(\tx^{s-1})$\,\,\footnotemark
            \FOR{$t=1, 2, \ldots, m$}
            \STATE
            $v_{t-1}^s=\frac{1}{b}\sum_{i\in I_b}\big(\nabla f_i(x_{t-1}^s)-\nabla f_i(\tx^{s-1})\big) + g^s$
            \STATE $x_t^s = \prox_{\eta h}(x_{t-1}^s - \eta v_{t-1}^s)$ (call PO once)
            \ENDFOR
        \STATE $\tx^s = x_m^s$
        \ENDFOR
        \ENSURE $\hx$ chosen uniformly from $\{x_{t-1}^s\}_{t\in[m], s\in[S]}$
	\end{algorithmic}
\end{algorithm}
\footnotetext{If $B=n$, ProxSVRG+ is almost the same as ProxSVRG (i.e., $g^s=\frac{1}{n}\sum_{j=1}^n\nabla f_j(\tx^{s-1})=\nabla f(\tx^{s-1})$) except some detailed parameter settings (e.g., step-size, epoch length).}

Compared with Prox-SVRG, ProxSVRG \citep{reddi2016proximal} analyzed the nonconvex functions while Prox-SVRG \citep{xiao2014proximal} only analyzed the convex functions.
The major difference of our ProxSVRG+ is that we avoid the computation of the full gradient at the beginning of each epoch, i.e., $B$ may not equal to $n$ (see Line 4 of Algorithm \ref{alg:1}) while ProxSVRG and Prox-SVRG used $B=n$. Note that even if we choose $B=n$, our analysis is more stronger than ProxSVRG \citep{reddi2016proximal}.
Also, our ProxSVRG+ shows that the ``stochastically controlled'' trick of SCSG \citep{lei2017non}
(i.e., the length of each epoch is a geometrically distributed random variable)
is not really necessary for achieving the desired bound.\footnote{A similar observation was also made in Natasha1.5 \citep{allen2017natasha}.
	 However, Natasha1.5 divides each epoch into multiple sub-epochs and randomly chooses the iteration point at the end of each sub-epoch. In our ProxSVRG+, the length of an epoch is deterministic and it directly uses the last iteration point at the end of each epoch.
}
As a result, our straightforward ProxSVRG+ generalizes the result of SCSG to the more general nonsmooth nonconvex case and yields simpler analysis.

\section{Convergence Results}
\label{sec:convg}
Now, we present the main theorem for our ProxSVRG+ which corresponds to
the last two rows in Table \ref{tab:1} and give some remarks.

\begin{theorem}\label{thm:1}
Let step size $\eta=\frac{1}{6L}$ and $b$ denote the minibatch size. Then $\hx$ returned by Algorithm \ref{alg:1} is an $\epsilon$-accurate solution for problem (\ref{eq:form}) (i.e., $\E[\ns{\calG_\eta(\hx)}]\leq \epsilon$).
We distinguish the following two cases:
\begin{enumerate}[1)]
  \item We let batch size $B=n$. The number of SFO calls is at most
\begin{equation*}
36L\big(\Phi(x_0)-\Phi(x^*)\big)\Big(\frac{B}{\epsilon\sqrt{b}}+\frac{b}{\epsilon}\Big) =O\Big(\frac{n}{\epsilon\sqrt{b}}+\frac{b}{\epsilon}\Big).
\end{equation*}
  \item Under Assumption \ref{asp:var}, we
  let batch size $B=\min\{6\sigma^2/\epsilon,n\}$.
  The number of SFO calls is at most
\begin{equation*}
36L\big(\Phi(x_0)-\Phi(x^*)\big)\Big(\frac{B}{\epsilon\sqrt{b}}+\frac{b}{\epsilon}\Big) =O\Big(\big(n\wedge\frac{1}{\epsilon}\big)\frac{1}{\epsilon\sqrt{b}}+\frac{b}{\epsilon}\Big),
\end{equation*}
where $\wedge$ denotes the minimum.
\end{enumerate}
In both cases, the number of PO calls equals to the total number of iterations $T$, which is at most
$$\frac{36L}{\epsilon}\big(\Phi(x_0)-\Phi(x^*)\big)=O\left(\frac{1}{\epsilon}\right).$$
\end{theorem}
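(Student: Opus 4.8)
The plan is to prove a per-iteration descent inequality for $\Phi$, bound the variance of the gradient estimator $v_{t-1}^s$, relate the gradient mapping $\calG_\eta$ to the step the algorithm actually takes, and then telescope over the inner loop $t$ and the outer loop $s$. First I would use the optimality of the proximal step: since $x_t^s$ minimizes $h(y)+\inner{v_{t-1}^s}{y}+\frac{1}{2\eta}\ns{y-x_{t-1}^s}$ (the form \eqref{eq:prox2} with $v_{t-1}^s$ replacing $\nabla f$), the $\frac{1}{\eta}$-strong convexity of this subproblem gives a three-point inequality which, evaluated at $y=x_{t-1}^s$, reads $h(x_t^s)-h(x_{t-1}^s)+\inner{v_{t-1}^s}{x_t^s-x_{t-1}^s}\le-\frac{1}{\eta}\ns{x_t^s-x_{t-1}^s}$. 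Combining this with the $L$-smoothness bound on $f(x_t^s)$ and applying Young's inequality to the cross term $\inner{\nabla f(x_{t-1}^s)-v_{t-1}^s}{x_t^s-x_{t-1}^s}$ yields a one-step bound
\[
\Phi(x_t^s)\le\Phi(x_{t-1}^s)+\tfrac{1}{2\beta}\ns{\nabla f(x_{t-1}^s)-v_{t-1}^s}-\big(\tfrac1\eta-\tfrac L2-\tfrac\beta2\big)\ns{x_t^s-x_{t-1}^s}.
\]

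Next I would control the variance. Since $v_{t-1}^s$ is a $b$-sample average of $\nabla f_i(x_{t-1}^s)-\nabla f_i(\tx^{s-1})$ plus $g^s$, the variance of a minibatch average together with $L$-smoothness gives $\E\ns{v_{t-1}^s-\nabla f(x_{t-1}^s)}\le\frac{L^2}{b}\ns{x_{t-1}^s-\tx^{s-1}}$ when $B=n$ (the estimator is then unbiased because $g^s=\nabla f(\tx^{s-1})$), plus an extra bias term at most $\sigma^2/B$ under Assumption \ref{asp:var} when $B<n$. The structural fact I would lean on is that the distance from the snapshot telescopes: writing $x_{t-1}^s-\tx^{s-1}=\sum_{j<t}(x_j^s-x_{j-1}^s)$ and using Cauchy--Schwarz, $\sum_{t=1}^m\ns{x_{t-1}^s-\tx^{s-1}}\le\frac{m^2}{2}\sum_{t=1}^m\ns{x_t^s-x_{t-1}^s}$, so the accumulated variance is controlled by the very same single-step lengths that appear with a negative sign in the descent bound.

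To turn step lengths into a gradient-mapping bound I would use non-expansiveness of $\prox_{\eta h}$: with $\bx_t^s:=\prox_{\eta h}(x_{t-1}^s-\eta\nabla f(x_{t-1}^s))$ we have $\calG_\eta(x_{t-1}^s)=\frac1\eta(x_{t-1}^s-\bx_t^s)$ and $\n{\bx_t^s-x_t^s}\le\eta\n{\nabla f(x_{t-1}^s)-v_{t-1}^s}$, whence $\eta^2\ns{\calG_\eta(x_{t-1}^s)}\le2\ns{x_t^s-x_{t-1}^s}+2\eta^2\ns{\nabla f(x_{t-1}^s)-v_{t-1}^s}$. I would then sum the descent over $t=1,\dots,m$ and $s=1,\dots,S$ (telescoping via $x_m^s=\tx^s=x_0^{s+1}$) and substitute the variance bound. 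With $\eta=\frac1{6L}$ and epoch length $m=\sqrt b$ (so that $\frac{L^2}{b}\cdot\frac{m^2}{2}$ is a fixed multiple of $L^2$ small enough to be absorbed), the net coefficient on $\sum_{s,t}\ns{x_t^s-x_{t-1}^s}$ is strictly negative, giving $\sum_{s,t}\E\ns{x_t^s-x_{t-1}^s}=O\big(\eta(\Phi(x_0)-\Phi(x^*))\big)$ plus a batch contribution. Averaging over the $T=Sm$ returned points yields $\E\ns{\calG_\eta(\hx)}=\frac1T\sum_{s,t}\E\ns{\calG_\eta(x_{t-1}^s)}\le\epsilon$ once $T=O(1/\epsilon)$ and, in the online case, $B=\Theta(\sigma^2/\epsilon)$ so that $\sigma^2/B=O(\epsilon)$. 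This delivers the PO count $T=O(1/\epsilon)$, and the SFO count is $SB+2bT=T(B/m+2b)=O\big(\frac{B}{\epsilon\sqrt b}+\frac b\epsilon\big)$ by the choice $m=\sqrt b$.

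The main obstacle I expect is the tension in the last step: the variance grows with the squared distance to the snapshot and hence with $m$, whereas the descent only supplies a negative multiple of each single-step length. Balancing them forces the precise pairing $m=\sqrt b$ together with the explicit constant $\eta=\frac1{6L}$, so that the accumulated variance is strictly dominated by the descent and the net coefficient stays negative while the gradient mapping is still captured. Because ProxSVRG+ uses a \emph{deterministic} epoch length (unlike the geometrically distributed length in SCSG), this worst-case growth has to be bounded directly rather than in expectation over a random stopping time, which is exactly where the careful but elementary bookkeeping in the variance and telescoping steps is needed.
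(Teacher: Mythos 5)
Your proposal is correct and reaches the stated bounds, but it takes a genuinely different route from the paper in two places. First, you derive the one-step descent from the $\frac{1}{\eta}$-strong convexity of the prox subproblem evaluated only at $y=x_{t-1}^s$, which yields a negative multiple of the step length $\ns{x_t^s-x_{t-1}^s}$ alone, and you then recover the gradient mapping \emph{a posteriori} via non-expansiveness, $\eta^2\ns{\calG_\eta(x_{t-1}^s)}\le 2\ns{x_t^s-x_{t-1}^s}+2\eta^2\ns{\nabla f(x_{t-1}^s)-v_{t-1}^s}$. The paper instead applies its Lemma \ref{lm:key} twice (once at $z=\bx_t^s$ and once with $x^+=\bx_t^s$, $z=x_{t-1}^s$) so that the term $-\ns{\bx_t^s-x_{t-1}^s}=-\eta^2\ns{\calG_\eta(x_{t-1}^s)}$ sits inside the descent inequality with a negative coefficient from the outset, with the cross term controlled by Lemma \ref{lm:2}. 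Second, you absorb the accumulated snapshot distance by a single epoch-level Cauchy--Schwarz bound $\sum_{t=1}^m\ns{x_{t-1}^s-\tx^{s-1}}\le\frac{m^2}{2}\sum_{t=1}^m\ns{x_t^s-x_{t-1}^s}$, whereas the paper uses a per-iteration Young's inequality with the time-varying parameter $\alpha=2t-1$ and verifies that the resulting coefficients $\frac{13L}{8t}-\frac{L}{6b}-\frac{13L}{8t+4}$ stay non-negative for $m=\sqrt b$. Your version is more modular and makes the balance $\frac{L^2m^2}{b}=O(L^2)$ transparent; the paper's version keeps $\ns{\calG_\eta(x_{t-1}^s)}$ as a per-iteration quantity in the descent, which is precisely what lets Theorem \ref{thm:pl1} plug in the PL inequality $\ns{\calG_\eta(x_{t-1}^s)}\ge 2\mu(\Phi(x_{t-1}^s)-\Phi^*)$ iteration by iteration to get the linear rate without restart, and it also produces the explicit constant $36L$. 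With your averaged recovery of the gradient mapping that PL extension would be less direct. Two cosmetic points: the extra $\sigma^2/B$ term is additional variance of the (still unbiased) estimator, not a bias; and your SFO count $SB+2bT$ differs from the paper's $SB+Smb$ only by the constant factor from evaluating $\nabla f_i$ at both $x_{t-1}^s$ and $\tx^{s-1}$.
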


\topic{Remark}
  The proof for Theorem \ref{thm:1} is notably different from that of ProxSVRG \citep{reddi2016proximal}.
  \cite{reddi2016proximal} used a Lyapunov function $R_t^{s+1}=\Phi(x_t^{s+1})+c_t\ns{x_t^{s+1}-\tx^S}$ and showed that $R^s$ decreases by the accumulated gradient mapping $\sum_{t=1}^{m}\ns{\calG_\eta(x_t^s)}$ in epoch $s$. In our proof, we directly show that $\Phi(x^{s})$ decreases by $\sum_{t=1}^{m}\ns{\calG_\eta(x_t^s)}$ using a different analysis.
  This is made possible by tightening the inequalities using Young's inequality and Lemma \ref{lm:2} (which gives the relation between the variance of stochastic gradient estimator and the inner product of the gradient difference and point difference).
  Also, our convergence result holds for any minibatch size $b\in[1,n]$ unlike ProxSVRG $b\leq n^{2/3}$ (see Figure \ref{fig:1}). Moreover, ProxSVRG+ uses much less proximal oracle calls than ProxSVRG (see Figure \ref{fig:2}).

  For the online/stochastic Case 2),  we avoid the computation of the full gradient at the beginning of each epoch, i.e., $B\neq n$. Then, we use the similar idea in SCSG \citep{lei2017non} to bound the variance term, but we do not need the ``stochastically controlled'' trick of SCSG (as we discussed in Section \ref{sec:alg}) to achieve the desired convergence bound which yields a much simpler analysis for our ProxSVRG+.

We defer the proof of Theorem \ref{thm:1} to Appendix \ref{app:proof1}.
Also, similar convergence results for other choices of epoch length $m \neq \sqrt{b}$ are provided in Appendix \ref{app:epoch}.

\section{Convergence Under PL Condition}
\label{sec:pl}\vspace{-1mm}
In this section, we provide the global linear convergence rate for nonconvex functions under the Polyak-\L{}ojasiewicz (PL) condition \citep{polyak1963gradient}.
The original form of PL condition is
\begin{equation}\label{eq:pl}
\exists \mu>0, ~\mathrm{such~that} ~\ns{\nabla f(x)} \geq 2\mu (f(x)-f^*),~ \forall x,
\end{equation}
where $f^*$ denotes the (global) optimal function value.
It is worth noting that $f$ satisfies PL condition when $f$ is $\mu$-strongly convex.
Moreover, \cite{karimi2016linear} showed that PL condition is weaker than many conditions
(e.g., strong convexity (SC), restricted strong convexity (RSC) and weak strong convexity (WSC)
\citep{necoara2015linear}).
Also, if $f$ is convex, PL condition is equivalent to the error bounds (EB) and quadratic growth (QG) condition \citep{luo1993error,anitescu2000degenerate}.
Note that PL condition implies that every stationary point is a global minimum, but it does not imply there is a unique minimum unlike the strongly convex condition.

\topic{Further Motivation} In many cases, although the loss function is generally nonconvex,
the local region near a local minimum may satisfy the PL condition.
In fact, there are some recent studies showing
the strong convexity in the neighborhood of the ground truth solution in some simple neural networks
\citep{zhong2017recovery,fu2018local}.
Such results provide further motivation for studying the PL condition.
Moreover, we argue that our ProxSVRG+ is particularly desirable
in this case since it first converges sublinearly $O(1/\epsilon)$ (according to Theorem \ref{thm:1})
then \emph{automatically} converges linearly $O(\log1/\epsilon)$ (according to Theorem \ref{thm:pl1})
in the regions as long as the loss function satisfies the PL condition locally in these regions.
We list the convergence results in Table \ref{tab:3}  (also see the remarks after Theorem \ref{thm:pl1}).
\vspace{-4mm}
\begin{table}[!h]
 \centering
 \caption{Under the PL condition with parameter $\mu$}
 \label{tab:3}\vspace{1mm}
 \begin{tabular}{|c|c|c|c|}
  \hline
   	\multirow{2}*{Algorithms}  & Stochastic first-order & Proximal oracle   & Addi. \\
			& oracle (SFO)         &  (PO)                   & condition\\ \hline

  ProxGD~\citep{karimi2016linear}    & \multirow{2}*{$O(\frac{n}{\mu}\log\frac{1}{\epsilon})$} & \multirow{2}*{$O(\frac{1}{\mu}\log\frac{1}{\epsilon})$}
    & \multirow{2}*{--} \\
  (full gradient) &&& \\\hline

  ProxSVRG/SAGA
  & \multirow{2}*{$O\big(\frac{n}{\mu\sqrt{b}}\log\frac{1}{\epsilon}
  +n\log\frac{1}{\epsilon}\big)$}
  & \multirow{2}*{$O\big(\frac{n}{\mu b^{3/2}}\log\frac{1}{\epsilon}\big)$}
    & \multirow{2}*{$b\le n^{2/3}$} \\
   \citep{reddi2016proximal} &&&  \\\hline

  SCSG~\citep{lei2017non}
  &\multirow{3}*{$O\Big(\frac{b^{\frac{1}{3}}}{\mu}\big(n\wedge\frac{1}{\mu\epsilon}\big)^{\frac{2}{3}}
          \log\frac{1}{\epsilon}
        +\big(n\wedge\frac{1}{\mu\epsilon}\big)\log\frac{1}{\epsilon}\Big)$}
  &\multirow{3}*{NA}& \multirow{3}*{$\sigma = O(1)$} \\
   (smooth nonconvex, &&&\\
   i.e., $h(x)\equiv 0$) &&& \\\hline

  & \multirow{2}*{$O\big(\frac{n}{\mu\sqrt{b}}\log\frac{1}{\epsilon}
  +\frac{b}{\mu}\log\frac{1}{\epsilon}\big)$}
  & \multirow{2}*{$O(\frac{1}{\mu}\log\frac{1}{\epsilon})$}   & \multirow{2}*{--} \\
  \red{ProxSVRG+} &&& \\\cline{2-4}

   (this paper) &   \multirow{2}*{$O\Big(\big(n\wedge\frac{1}{\mu\epsilon}\big)\frac{1}{\mu\sqrt{b}}\log\frac{1}{\epsilon}
   +\frac{b}{\mu}\log\frac{1}{\epsilon}\Big)$}
  & \multirow{2}*{$O(\frac{1}{\mu}\log\frac{1}{\epsilon})$}  & \multirow{2}*{$\sigma = O(1)$} \\
  &&& \\\hline
 \end{tabular}

 \vspace{1mm}
 {\small
 The notation $\wedge$ denotes the minimum.
 Similar to Table \ref{tab:2},
 ProxSVRG+ is  better than ProxGD and ProxSVRG/SAGA, and generalizes the SCSG by choosing different minibatch size $b$.}\vspace{-4.5mm}
\end{table}

Due to the nonsmooth term $h(x)$ in problem (\ref{eq:form}),
we use the gradient mapping (see (\ref{eq:gradmap})) to define a more general form of PL condition as follows:
\begin{equation}\label{eq:gelpl}
\exists \mu>0, ~\mathrm{such~that} ~\ns{\calG_\eta(x)} \geq 2\mu (\Phi(x)-\Phi^*), ~\forall x.
\end{equation}
Recall that if $h(x)$ is a constant function, the gradient mapping reduces to  $\calG_\eta(x) = \nabla f(x)$. 
Our PL condition is different from the one used in \citep{karimi2016linear,reddi2016proximal}.
See the Remark (\ref{rem:2}) after Theorem \ref{thm:pl1}.

Similar to Theorem \ref{thm:1}, we provide the convergence result of ProxSVRG+ (Algorithm \ref{alg:1}) under PL-condition in the following Theorem \ref{thm:pl1}.
Note that under PL condition (i.e. (\ref{eq:gelpl}) holds), ProxSVRG+ can directly use the final iteration $\tx^S$ as the output point instead of the randomly chosen one $\hx$. Similar to \citep{reddi2016proximal}, we assume the condition number $L/\mu > \sqrt{n}$ for simplicity.
Otherwise, one can choose different step size $\eta$ which is similar to the case where
we deal with other choices of epoch length $m$ (see Appendix \ref{app:epoch}).

\begin{theorem}\label{thm:pl1}
Let step size $\eta=\frac{1}{6L}$ and $b$ denote the minibatch size. Then the final iteration point $\tx^S$ in Algorithm \ref{alg:1} satisfies $\E[\Phi(\tx^S)-\Phi^*]\leq \epsilon$ under PL condition.
We distinguish the following two cases:
\begin{enumerate}[1)]
  \item We let batch size $B=n$. The number of SFO calls is bounded by
\begin{equation*}
O\Big(\frac{n}{\mu\sqrt{b}}\log\frac{1}{\epsilon}
  +\frac{b}{\mu}\log\frac{1}{\epsilon}\Big).
\end{equation*}
  \item Under Assumption \ref{asp:var}, we let batch size $B=\min\{\frac{6\sigma^2}{\mu\epsilon},n\}$.
  The number of SFO calls is bounded by
\begin{equation*}
O\Big(\big(n\wedge\frac{1}{\mu\epsilon}\big)\frac{1}{\mu\sqrt{b}}\log\frac{1}{\epsilon}
   +\frac{b}{\mu}\log\frac{1}{\epsilon}\Big),
\end{equation*}where $\wedge$ denotes the minimum.
\end{enumerate}
In both cases, the number of PO calls equals to the total number of iterations $T$ which is bounded by $$O\Big(\frac{1}{\mu}\log\frac{1}{\epsilon}\Big).$$
\end{theorem}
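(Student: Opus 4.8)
The plan is to turn the one-epoch descent established in the proof of Theorem~\ref{thm:1} into a geometric recursion on the suboptimality gap $a_s := \E[\Phi(\tx^s)-\Phi^*]$ and then iterate it, using the PL inequality (\ref{eq:gelpl}) to produce a contraction factor strictly below $1$. First I would extract from the analysis of Theorem~\ref{thm:1} the clean per-step descent of the objective itself (this is the ``notable difference'' from \citep{reddi2016proximal}, made possible by Young's inequality and Lemma~\ref{lm:2}, which avoids their Lyapunov correction term),
\[
c\,\E[\ns{\calG_\eta(x_{t-1}^s)}] \le \E[\Phi(x_{t-1}^s)] - \E[\Phi(x_t^s)] + \delta,
\]
with $c=\Theta(\eta)=\Theta(1/L)$, where $\delta=0$ in the finite-sum case ($B=n$) and $\delta=\Theta(\eta\sigma^2/B)$ in the online case, since under Assumption~\ref{asp:var} the batch gradient obeys $\E\ns{g^s-\nabla f(\tx^{s-1})}\le \sigma^2/B$. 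Summing over $t=1,\dots,m$ telescopes (using $x_0^s=\tx^{s-1}$, $x_m^s=\tx^s$) to $c\sum_{t=1}^m\E[\ns{\calG_\eta(x_{t-1}^s)}] \le a_{s-1}-a_s+m\delta$.

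Next I would inject the PL condition (\ref{eq:gelpl}), which gives $\ns{\calG_\eta(x_{t-1}^s)}\ge 2\mu(\Phi(x_{t-1}^s)-\Phi^*)$ at every iterate. The step I expect to be the main obstacle is recovering a factor of $m$: if one uses only the single $t=1$ term, then $\Phi(x_0^s)=\Phi(\tx^{s-1})$ yields merely $a_s\le(1-\Theta(\mu/L))a_{s-1}$, forcing $S=O(\frac{L}{\mu}\log\frac1\epsilon)$ epochs and hence a PO count that is a factor $\sqrt b$ too large. To recover the factor $m$ I would exploit the within-epoch monotonicity of $\E[\Phi(x_t^s)]$ implied by the clean per-step descent above: in the finite-sum case $\E[\Phi(x_{t-1}^s)]\ge\E[\Phi(\tx^s)]$ for all $t$, so $\sum_{t=1}^m\E[\Phi(x_{t-1}^s)-\Phi^*]\ge m\,a_s$; in the online case the same holds up to an accumulated slack, giving $\ge m\,a_s - m^2\delta$.

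Combining the telescoped descent with this lower bound yields $(1+2c\mu m)\,a_s \le a_{s-1}+\Theta(m\delta)$, i.e.
\[
a_s \le \rho\,a_{s-1} + \Theta(m\delta), \qquad \rho = \frac{1}{1+2c\mu m}.
\]
Here I would use the standing assumption $L/\mu>\sqrt n\ge\sqrt b=m$: with $c=\eta/6=\frac{1}{36L}$ it guarantees $2c\mu m=\frac{\mu m}{18L}<1$, so $\log(1/\rho)=\Theta(\mu m/L)$ and the homogeneous term $\rho^S a_0$ falls below $\epsilon/2$ after $S=O\big(\frac{L}{\mu m}\log\frac1\epsilon\big)$ epochs. (This is exactly the regime in which the step size $\eta=\frac{1}{6L}$ is admissible; outside it one rescales $\eta$, as the theorem notes.)

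Finally I would control the inhomogeneous term by summing the geometric series: its limit is $\Theta\big(m\delta/(1-\rho)\big)=\Theta\big(\delta/(c\mu)\big)=\Theta\big(\sigma^2/(\mu B)\big)$, so choosing $B=\min\{6\sigma^2/(\mu\epsilon),n\}$ forces it below $\epsilon/2$; in the finite-sum case $\delta=0$ and it vanishes. Thus $a_S=\E[\Phi(\tx^S)-\Phi^*]\le\epsilon$, and outputting the last iterate $\tx^S$ is legitimate. Reading off the cost with $S=O\big(\frac{L}{\mu\sqrt b}\log\frac1\epsilon\big)$, where each epoch uses $B+mb=B+b^{3/2}$ SFO calls and $m=\sqrt b$ PO calls, gives $\text{PO}=mS=O(\frac1\mu\log\frac1\epsilon)$ and the stated SFO bound $O\big(\frac{n}{\mu\sqrt b}\log\frac1\epsilon+\frac b\mu\log\frac1\epsilon\big)$ in Case~1 (absorbing $L=O(1)$), with $n$ replaced by $n\wedge\frac1{\mu\epsilon}$ in Case~2.
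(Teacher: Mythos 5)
Your overall plan---a per-epoch contraction $a_s \le \rho\,a_{s-1} + \Theta(m\delta)$ with $\rho \approx 1-\Theta(\mu m/L)$, followed by a geometric-series bound on the variance floor and the same parameter choices---has the right shape and matches the paper's final accounting. But the step you yourself flag as the main obstacle, recovering the factor $m$, is where the argument breaks. The per-step inequality that the analysis of Theorem~\ref{thm:1} actually provides (inequality (\ref{eq:useyoung})) is not the ``clean'' descent $c\,\E[\ns{\calG_\eta(x_{t-1}^s)}]\le \E[\Phi(x_{t-1}^s)]-\E[\Phi(x_t^s)]+\delta$: it carries the extra terms $-\frac{13L}{8t}\ns{x_t^s-\tx^{s-1}}$ and $+\big(\frac{L}{6b}+\frac{13L}{8t-4}\big)\ns{x_{t-1}^s-\tx^{s-1}}$, which are intrinsic to the variance bound (\ref{eq:useasp}) and cancel only when summed over the \emph{whole} epoch with $m=\sqrt b$. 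Consequently $\E[\Phi(x_t^s)]$ need not decrease step by step, and your within-epoch monotonicity claim $\E[\Phi(x_{t-1}^s)]\ge\E[\Phi(\tx^s)]$---which you use to conclude $\sum_{t=1}^m\E[\Phi(x_{t-1}^s)-\Phi^*]\ge m\,a_s$---is unsupported: summing (\ref{eq:useyoung}) from $t'$ to $m$ leaves the uncancelled positive term $\big(\frac{L}{6b}+\frac{13L}{8t'-4}\big)\E[\ns{x_{t'-1}^s-\tx^{s-1}}]$, so nothing prevents $\E[\Phi(\tx^s)]$ from exceeding an intermediate $\E[\Phi(x_{t'-1}^s)]$. (The partial sums that do telescope run from the \emph{start} of the epoch and give $\E[\Phi(x_{t}^s)]\lesssim \E[\Phi(\tx^{s-1})]$, an upper bound on the intermediate values---the wrong direction for your purpose.) This affects both the $B=n$ and the online case, since the drift terms are present even when $\delta=0$.

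The paper closes this gap by a different device: it applies the PL inequality to (\ref{eq:useyoung}) at every iteration to obtain $\E[\Phi(x_t^s)-\Phi^*]\le\alpha\,\E[\Phi(x_{t-1}^s)-\Phi^*]+(\text{drift})+\delta$ with $\alpha=1-\frac{\mu}{18L}$, and then telescopes the \emph{weighted} quantities $\Psi_t^s=\E[\Phi(x_t^s)-\Phi^*]/\alpha^t$ across the epoch. The price is that the weights perturb the drift coefficients, and one must re-verify $\frac{13L\alpha}{8t}-\frac{L}{6b}-\frac{13L}{8t+4}\ge 0$; this is exactly where the standing assumption $L/\mu>\sqrt n$ enters (to guarantee $\Gamma_t=\frac{1}{2t^2}-\frac{1}{8\sqrt n\,t}-\frac{1}{6b}\ge0$ for $t<m$), not, as you suggest, merely to keep $2c\mu m<1$. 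This yields the per-epoch contraction $a_s\le\alpha^m a_{s-1}+\frac{1-\alpha^m}{1-\alpha}\delta$ rigorously, after which your geometric-series endgame, the choice $B=\min\{6\sigma^2/(\mu\epsilon),n\}$, and the SFO/PO accounting go through as you describe. To repair your proof you would need either to justify the monotonicity claim by other means or to adopt the weighted telescoping.
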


\topic{Remark}
\begin{enumerate}[(1)]
  \item We show that ProxSVRG+ directly obtains a global linear convergence rate without restart by a nontrivial proof. Note that \cite{reddi2016proximal}
      used PL-SVRG/SAGA to restart ProxSVRG/SAGA $O(\log(1/\epsilon))$ times to obtain the linear convergence rate under PL condition.

      Moreover, similar to Table \ref{tab:2}, if we choose $b=1$ or $n$ for ProxSVRG+, then its convergence result is $O(\frac{n}{\mu}\log\frac{1}{\epsilon})$, which is the same as ProxGD \citep{karimi2016linear}.
      If we choose $b=n^{2/3}$ for ProxSVRG+, then the convergence result is $O(\frac{n^{2/3}}{\mu}\log\frac{1}{\epsilon})$, the same as the best result achieved by ProxSVRG/SAGA \citep{reddi2016proximal}.
      If we choose $b=1/(\mu\epsilon)^{2/3}$ (assuming $1/(\mu\epsilon)<n$) for ProxSVRG+, then its convergence result is $O(\frac{1}{\mu^{5/3}\epsilon^{2/3}}\log\frac{1}{\epsilon})$
      which generalizes the best result of SCSG \citep{lei2017non} to the
      more general nonsmooth nonconvex case and is better than ProxGD and ProxSVRG/SAGA.
      Also note that our ProxSVRG+ uses much less proximal oracle calls than ProxSVRG/SAGA if $b<n^{2/3}$.
  \item Another benefit of ProxSVRG+ is that it can \emph{automatically} switch to the faster linear convergence rate in some regions as long as the loss function satisfies the PL condition locally in these regions. This is impossible for ProxSVRG~\citep{reddi2016proximal} since it needs to be
  restarted many times.
  \item \label{rem:2}
  We want to point out that \citep{karimi2016linear,reddi2016proximal} used the following form of
  PL condition:
\begin{equation}\label{eq:plreddi}
\exists \mu>0, ~\mathrm{such~that} ~D_h(x,\alpha) \geq 2\mu (\Phi(x)-\Phi^*), ~\forall x,
\end{equation}
where $D_h(x,\alpha) := -2\alpha\min_y\big\{\inner{\nabla f(x)}{y-x}+\frac{\alpha}{2}\ns{y-x}+h(y)-h(x)\big\}$.
    Our PL condition is arguably more natural.
    In fact, one can show that if $\alpha=1/\eta$,
    our new PL condition \eqref{eq:gelpl} implies \eqref{eq:plreddi}.
    For a direct comparison with prior results,
    we also provide the proof of the same result of Theorem \ref{thm:pl1}
    under PL condition (\ref{eq:plreddi}) in the appendix.
\end{enumerate}
The proofs of Theorem \ref{thm:pl1} under PL form (\ref{eq:gelpl}) and (\ref{eq:plreddi}) are provided in Appendix \ref{app:pl} and \ref{app:pl2}, respectively.
Recently, \cite{csiba2017global} proposed a novel weakly PL condition.
The (strongly) PL condition (\ref{eq:gelpl}) or (\ref{eq:plreddi}) serves as a generalization of strong convexity as we discussed in the beginning of this section. One can achieve linear convergence under (\ref{eq:gelpl}) or (\ref{eq:plreddi}). However, the weakly PL condition \citep{csiba2017global} may be considered as a generalization of (weak) convexity. Although one only achieves the sublinear convergence under this condition, it is still interesting to figure out similar (sublinear) convergence (for ProxSVRG+, ProxSVRG, etc.) under their weakly PL condition.

\section{Experiments}
\label{sec:exp}\vspace{-1mm}
In this section, we present the experimental results.
We compare the nonconvex ProxSVRG+ with nonconvex ProxGD, ProxSGD \citep{ghadimi2016mini}, ProxSVRG \citep{reddi2016proximal}.
We conduct the experiments using the non-negative principal component analysis (NN-PCA) problem (same as \citep{reddi2016proximal}).
In general, NN-PCA is NP-hard.
Specifically, the optimization problem for a given set of samples (i.e., $\{z_i\}_{i=1}^n$) is:
\begin{equation}\label{eq:nnpca}
\min_{\n{x}\leq 1, x\geq0} -\frac{1}{2}x^T\Big(\sum_{i=1}^n{z_iz_i^T}\Big)x.
\end{equation}
Note that (\ref{eq:nnpca}) can be written in the form (\ref{eq:form}),
where $f(x)=\sum_{i=1}^n{f_i(x)}=\sum_{i=1}^n{-\frac{1}{2}(x^Tz_i)^2}$
and $h(x)=I_C(x)$ where set $C=\{x\in \R^d | \n{x}\leq 1, x\geq 0\}$.
We conduct the experiment on the standard MNIST and `a9a' datasets.~\footnote{The
 datasets can be downloaded from \url{https://www.csie.ntu.edu.tw/~cjlin/libsvmtools/datasets/}}
The experimental results on both datasets (corresponding to the first row and second row in Figure \ref{fig:n3}--\ref{fig:n5}) are almost the same.

The samples from each dataset are normalized, i.e., $\n{z_i}=1$ for all $i\in n$.
The parameters of the algorithms are chosen as follows:
$L$ can be precomputed from the data samples $\{z_i\}_{i=1}^n$ in the same way as in
\citep{Li2017convergence}.
The step sizes $\eta$ for different algorithms are set to be the ones used in their convergence results:
For ProxGD, it is $\eta=1/L$ (see Corollary 1 in \citep{ghadimi2016mini});
for ProxSGD, $\eta=1/(2L)$ (see Corollary 3 in \citep{ghadimi2016mini});
for ProxSVRG, $\eta=b^{3/2}/(3Ln)$ (see Theorem 6 in \citep{reddi2016proximal}).
The step size for our ProxSVRG+ is $1/(6L)$ (see our Theorem \ref{thm:1}).
We did not further tune the step sizes. The batch size $B$ (in Line 4 of Algorithm \ref{alg:1}) is equal to $n/5$ (i.e., 20\% data samples). We also considered $B=n/10$, the performance among these algorithms are similar to the case $B=n/5$. In practice, one can tune the step size $\eta$ and parameter $B$.

\eat{
Regarding the comparison among these algorithms, we use the number of SFO calls (see Definition \ref{dfn:oracle}) to evaluate them since the number of PO calls of them are the same except ProxSVRG (which is already clearly demonstrated by Figure \ref{fig:2}).
Note that we amortize the batch size ($n$ or $B$ in Line 4 of Algorithm \ref{alg:1}) into the inner loops, so that the curves in the figures are smoother,
i.e., the number of SFO calls for each iteration (inner loop) of ProxSVRG and ProxSVRG+ is counted as $b+n/m$ and $b+B/m$, respectively. Note that ProxGD uses $n$ SFO calls (full gradient) in each iteration.}
\vspace{-3mm}
\begin{figure}[!h]
\centering
  \begin{minipage}{\textwidth}
        \includegraphics[width=0.33\textwidth]{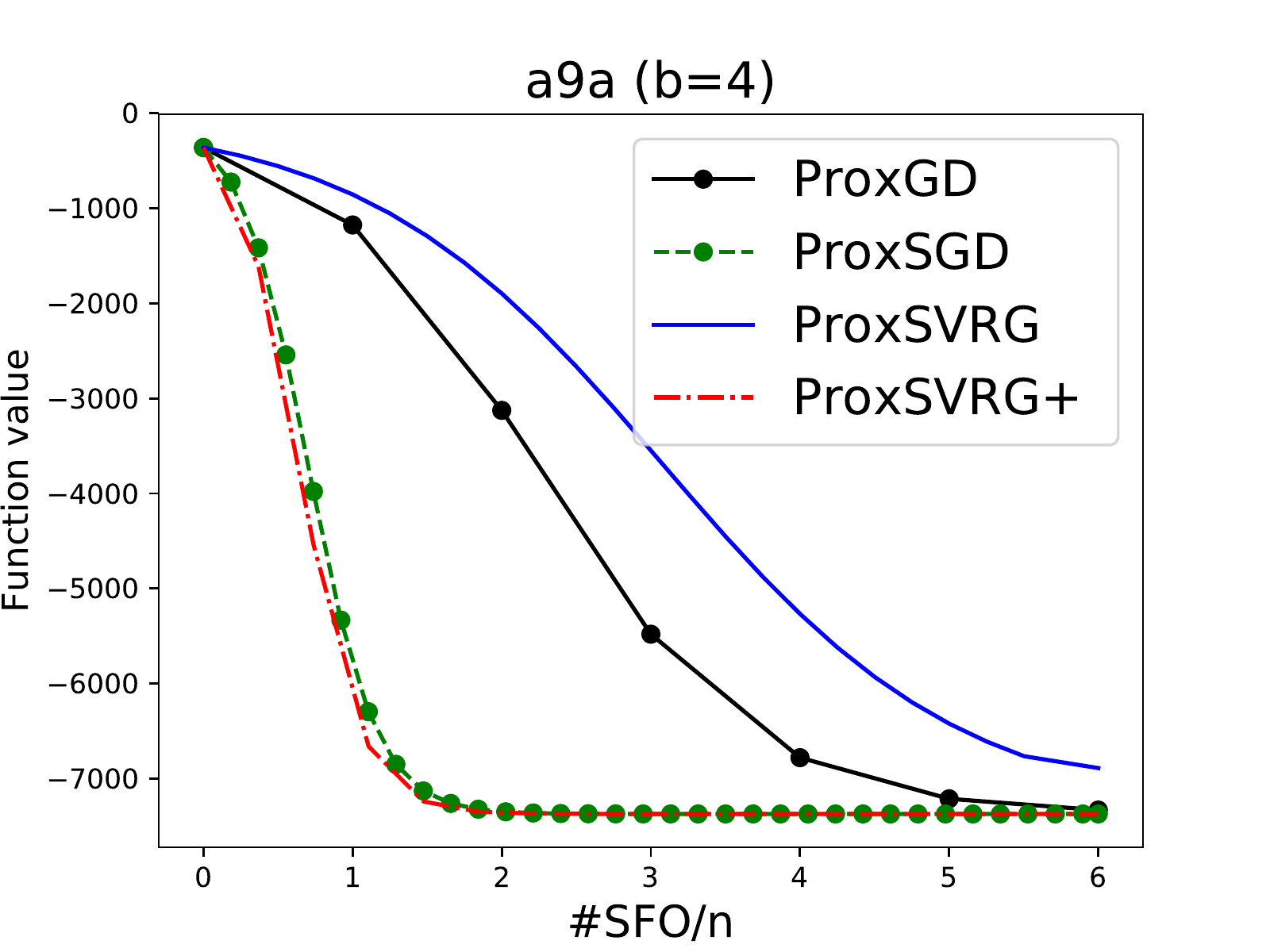}
        \includegraphics[width=0.33\textwidth]{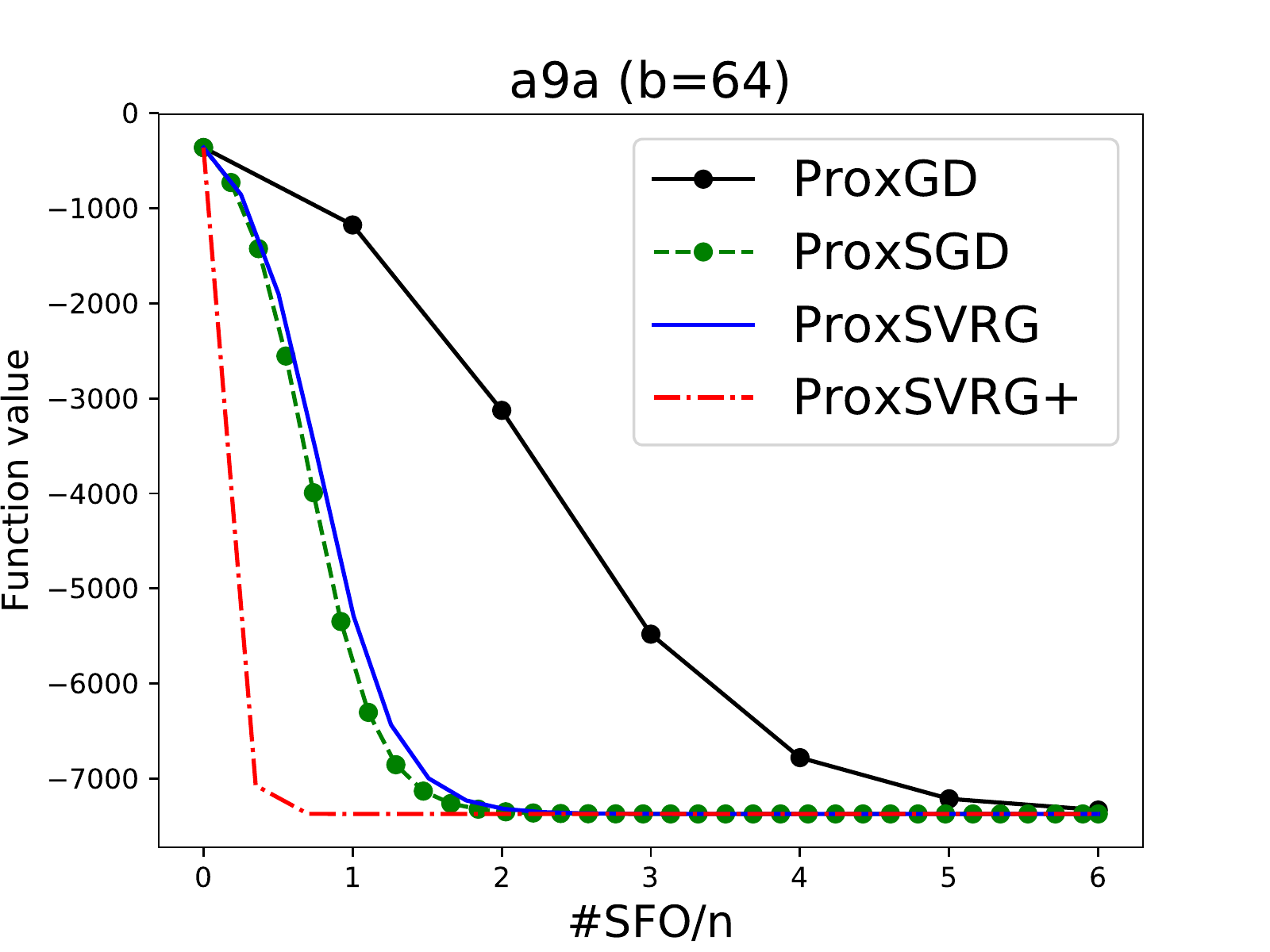}
        \includegraphics[width=0.33\textwidth]{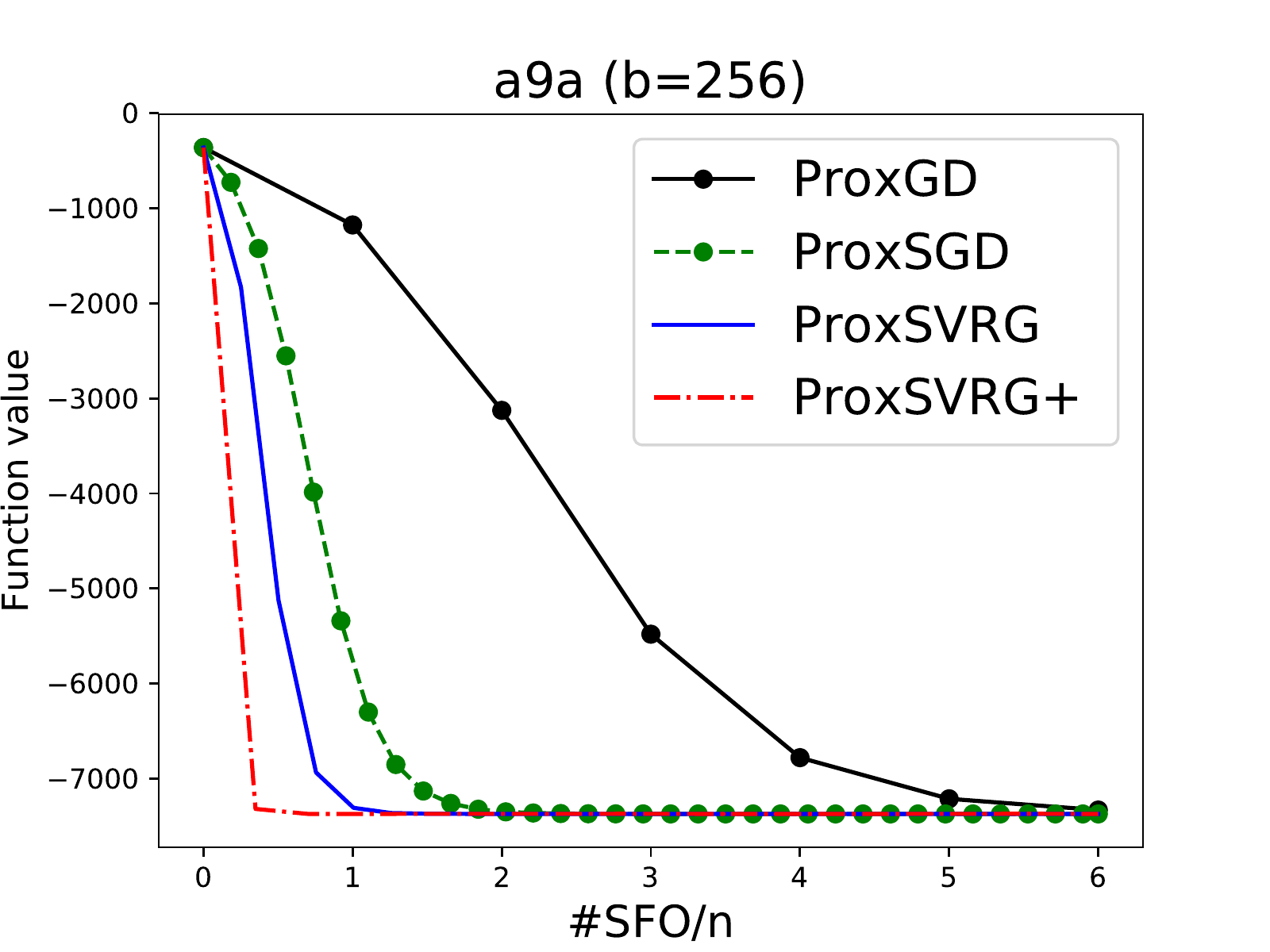}
  \end{minipage}\vspace{-3mm}
  \begin{minipage}{\textwidth}
        \centering
        \includegraphics[width=0.33\textwidth]{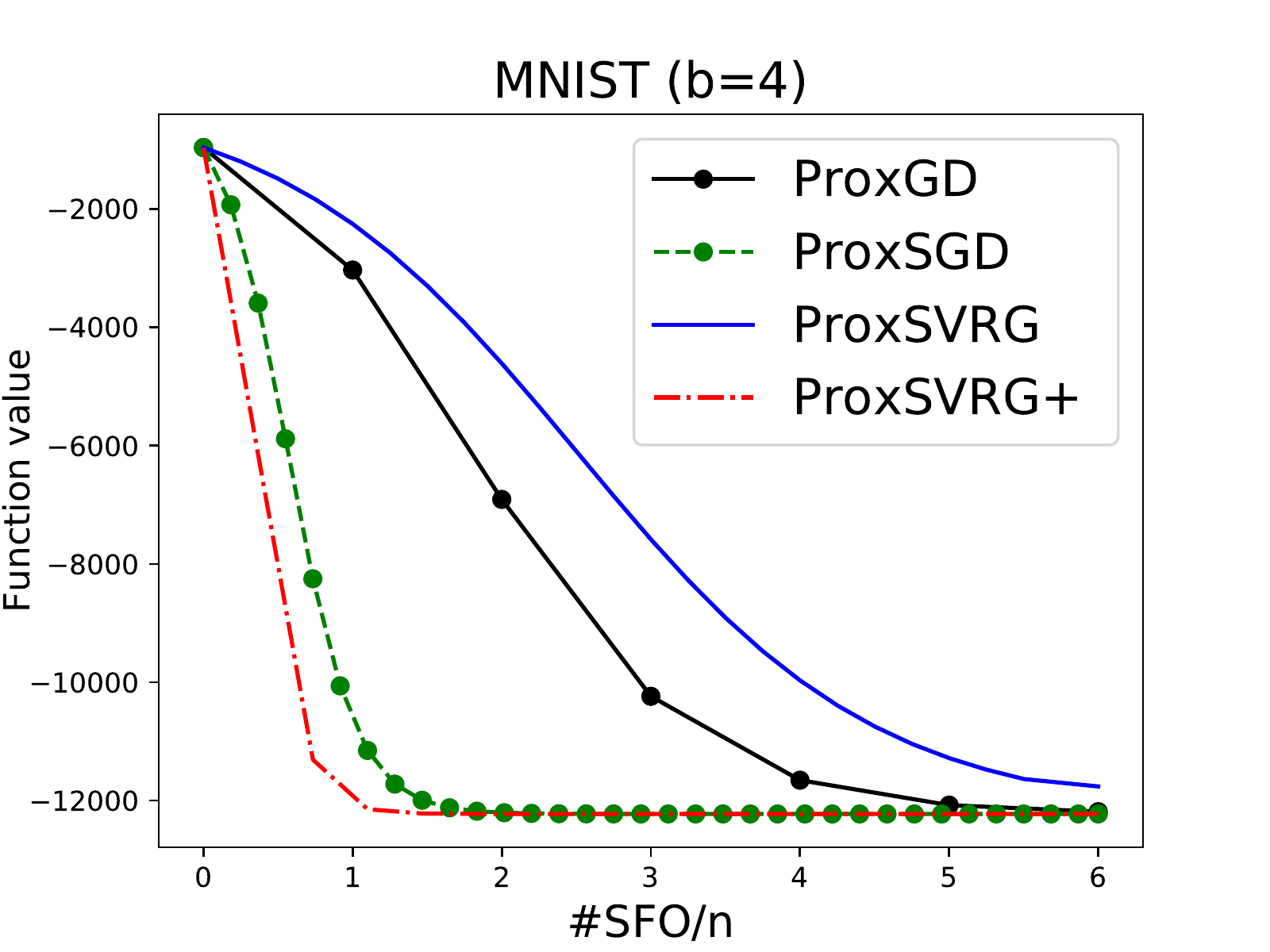}
        \includegraphics[width=0.33\textwidth]{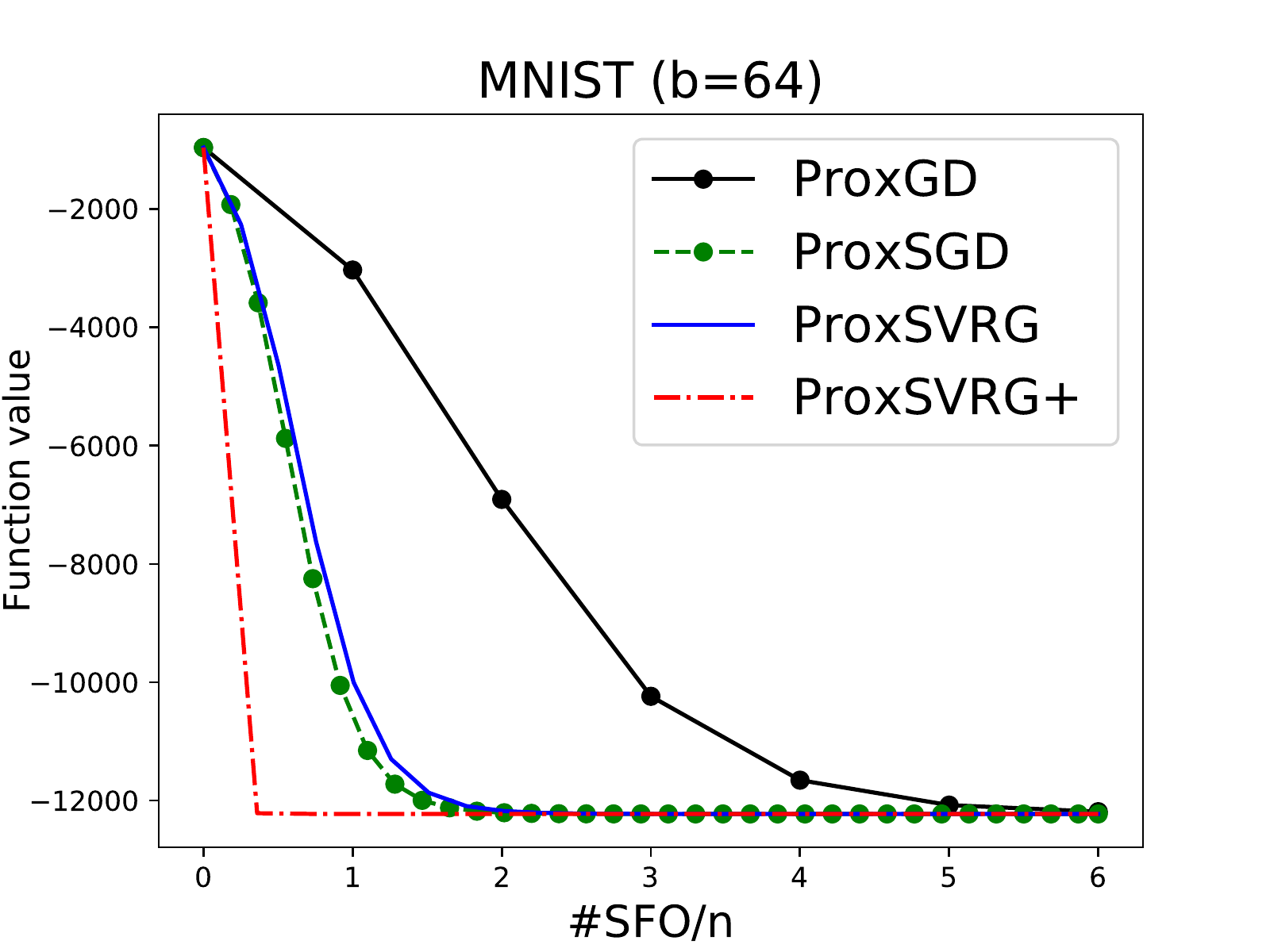}
        \includegraphics[width=0.33\textwidth]{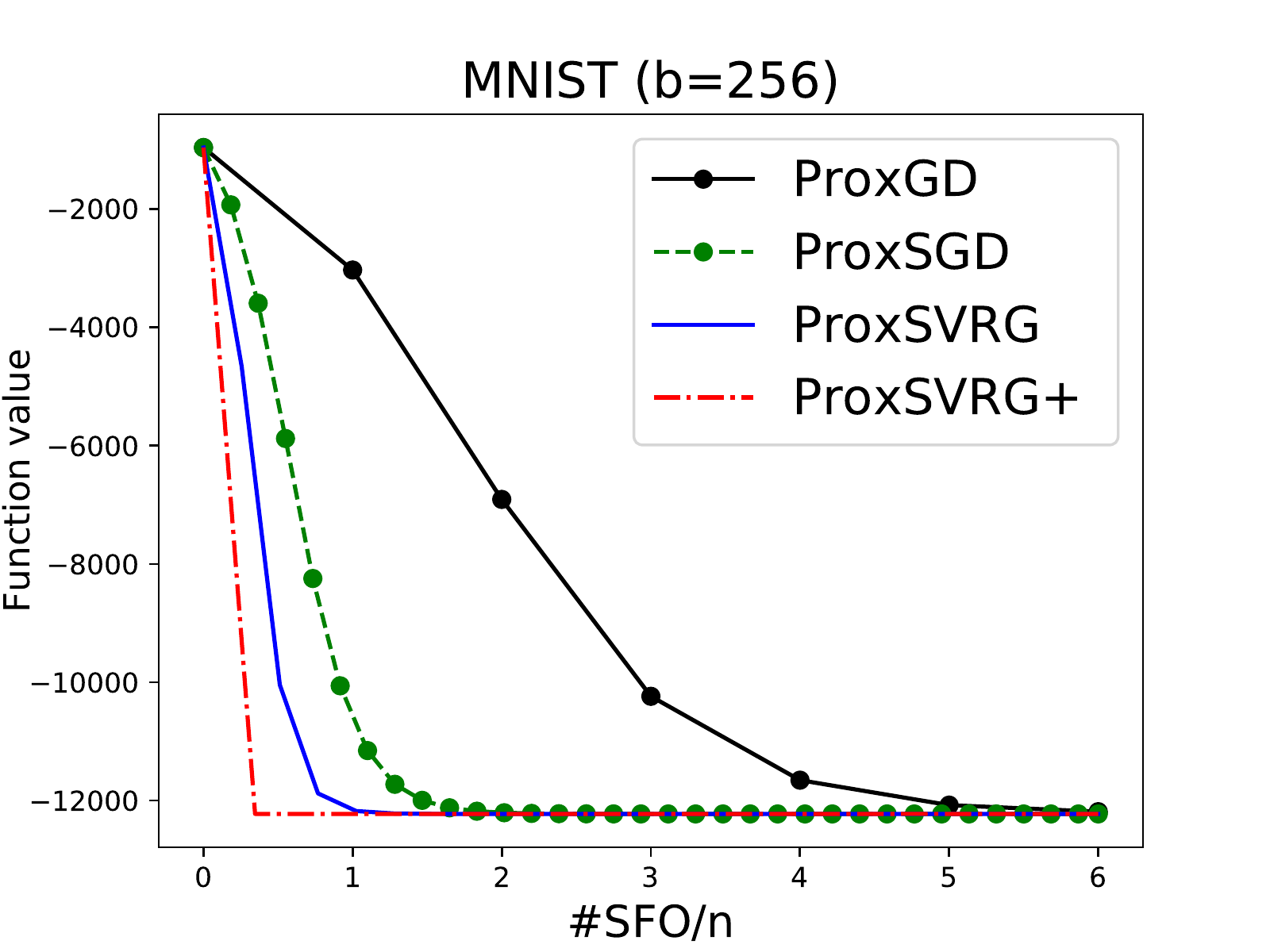}
  \end{minipage}\vspace{-3mm}
    \caption{Comparison among algorithms with different minibatch size $b$}
     \label{fig:n3}
\end{figure}\vspace{-5mm}

\newpage
In Figure \ref{fig:n3},
we compare the performance of these four algorithms
as we vary the minibatch size $b$.
In particular, the first column ($b=4$) shows that ProxSVRG+ and ProxSVRG perform similar to ProxSGD and ProxGD respectively, which is quite consistent with the theoretical results (Figure \ref{fig:1}). Then, ProxSVRG+ and ProxSVRG both get better as $b$ increases. Note that our ProxSVRG+ performs better than ProxGD, ProxSGD and ProxSVRG.
\vspace{-4mm}
\begin{figure}[!h]
\centering
\begin{minipage}{0.66\textwidth}
        \centering
        \includegraphics[width=0.496\textwidth]{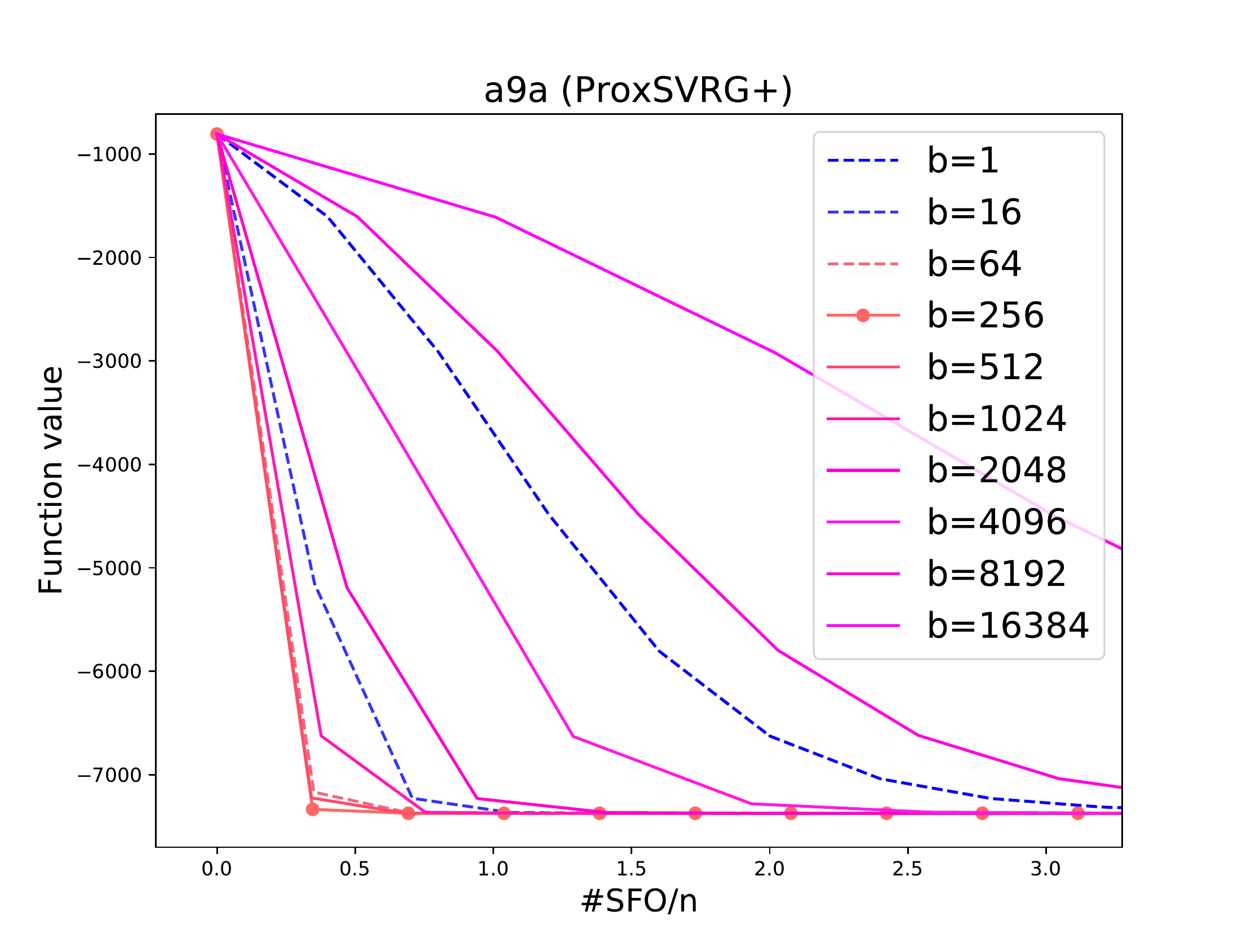}
        \includegraphics[width=0.496\textwidth]{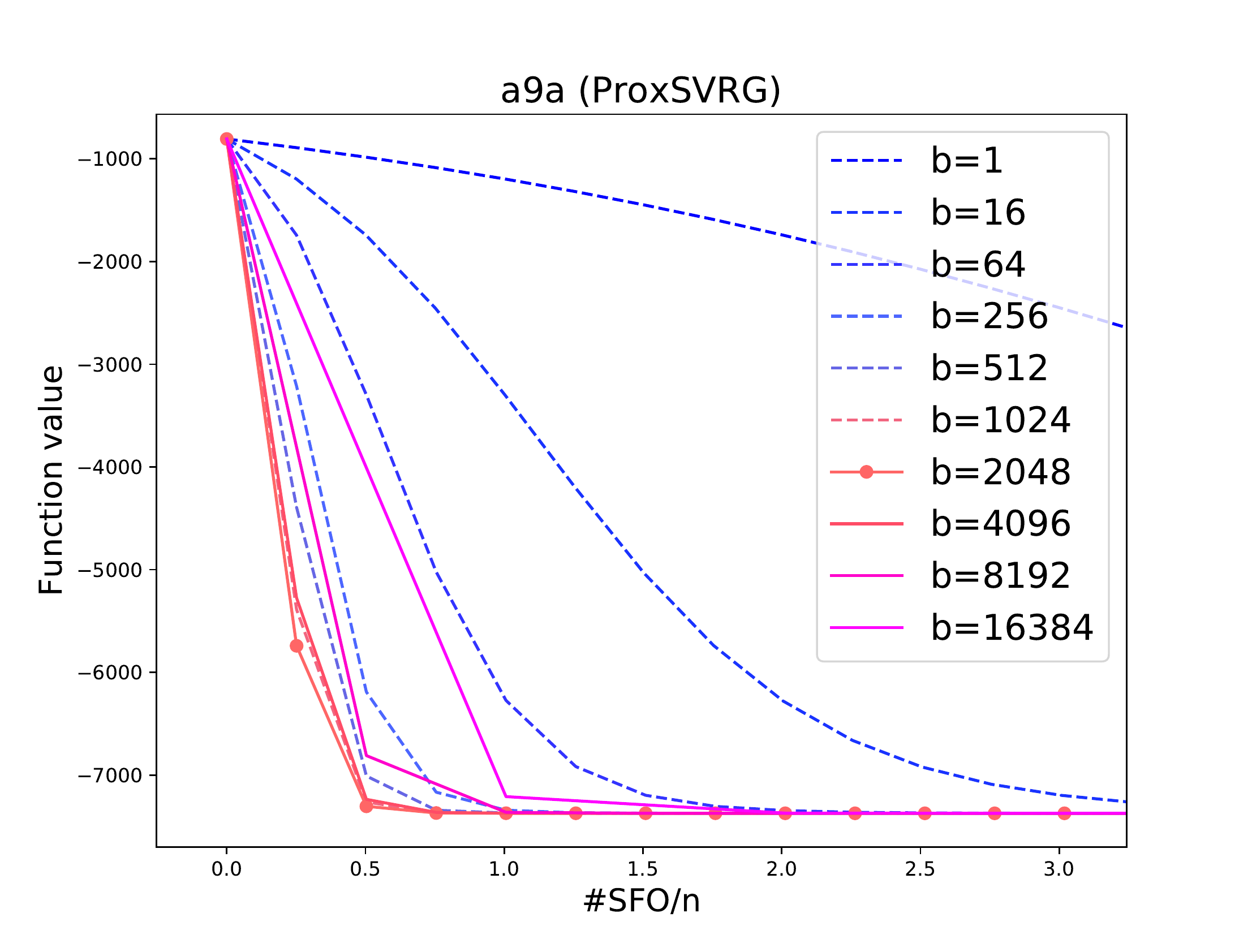}
  \end{minipage}\vspace{-1.7mm}
  \begin{minipage}{0.33\textwidth}
        \centering
        \includegraphics[width=\textwidth]{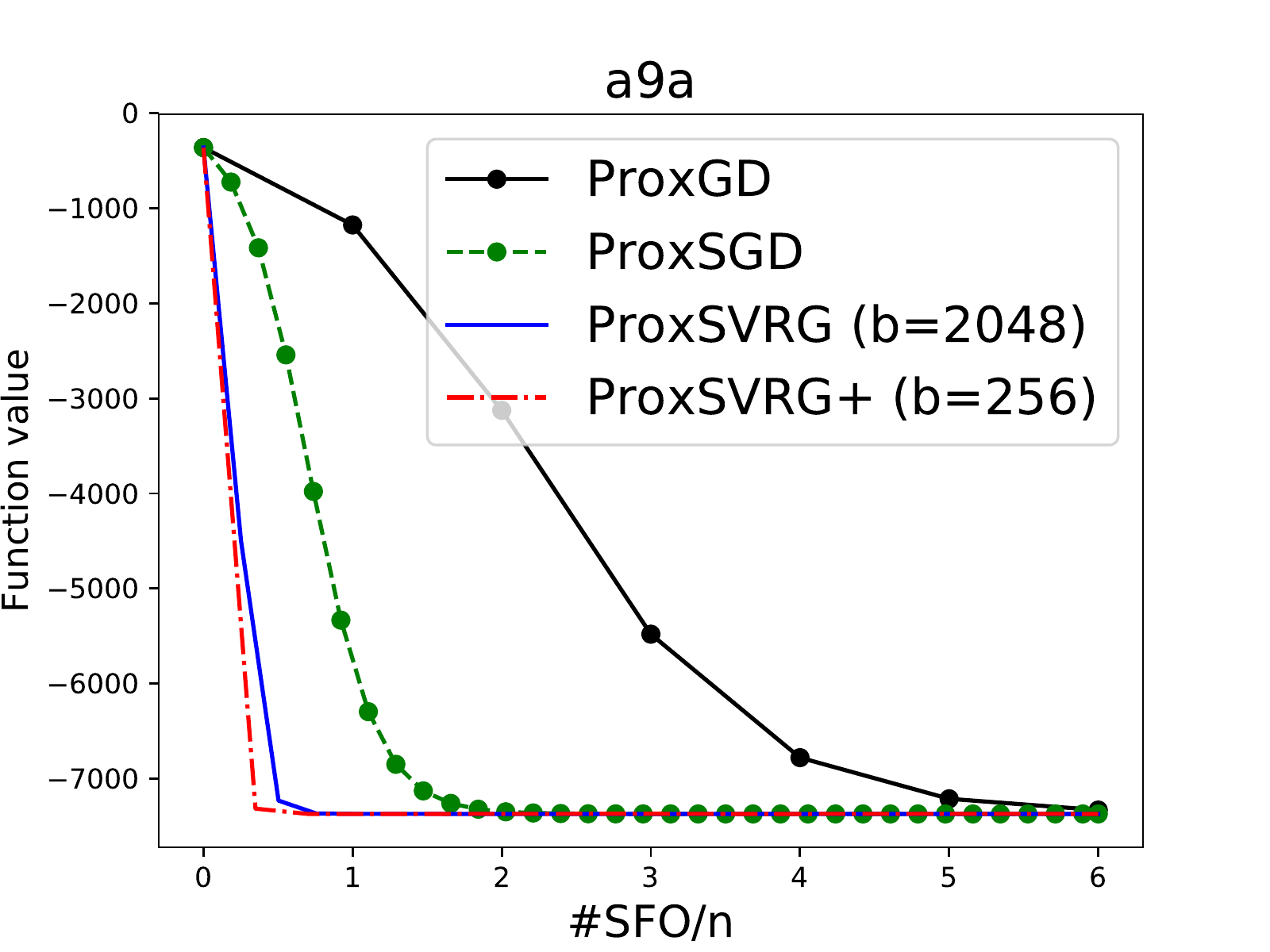}
  \end{minipage}\vspace{-1.7mm}
  \begin{minipage}{0.66\textwidth}
        \centering
        \includegraphics[width=0.496\textwidth]{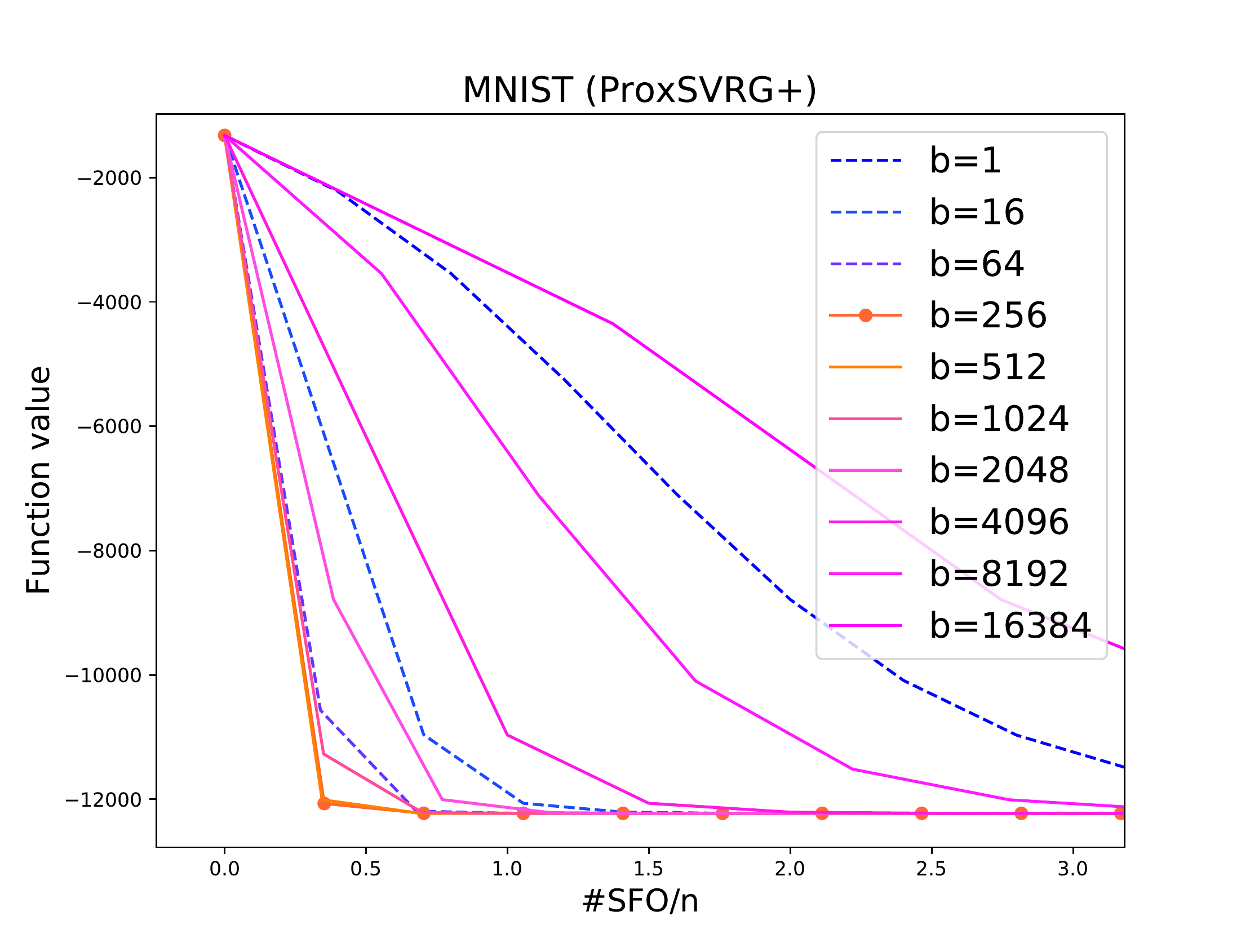}
        \includegraphics[width=0.496\textwidth]{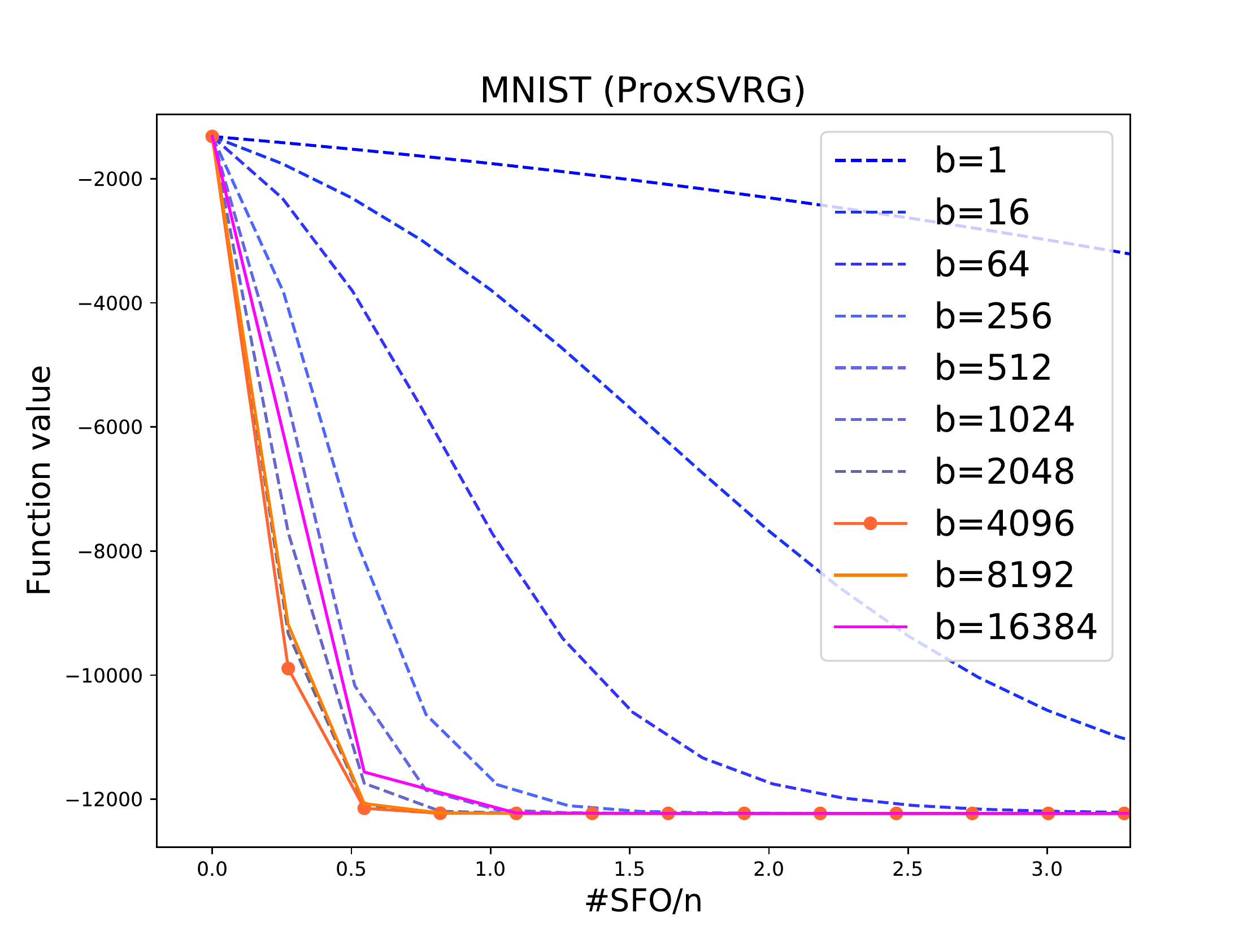}
        \vspace{-7mm}
  \caption{ProxSVRG+ and ProxSVRG under different $b$}
  \label{fig:n4}
  \end{minipage}\vspace{-1.7mm}
  \begin{minipage}{0.33\textwidth}
        \centering
        \includegraphics[width=\textwidth]{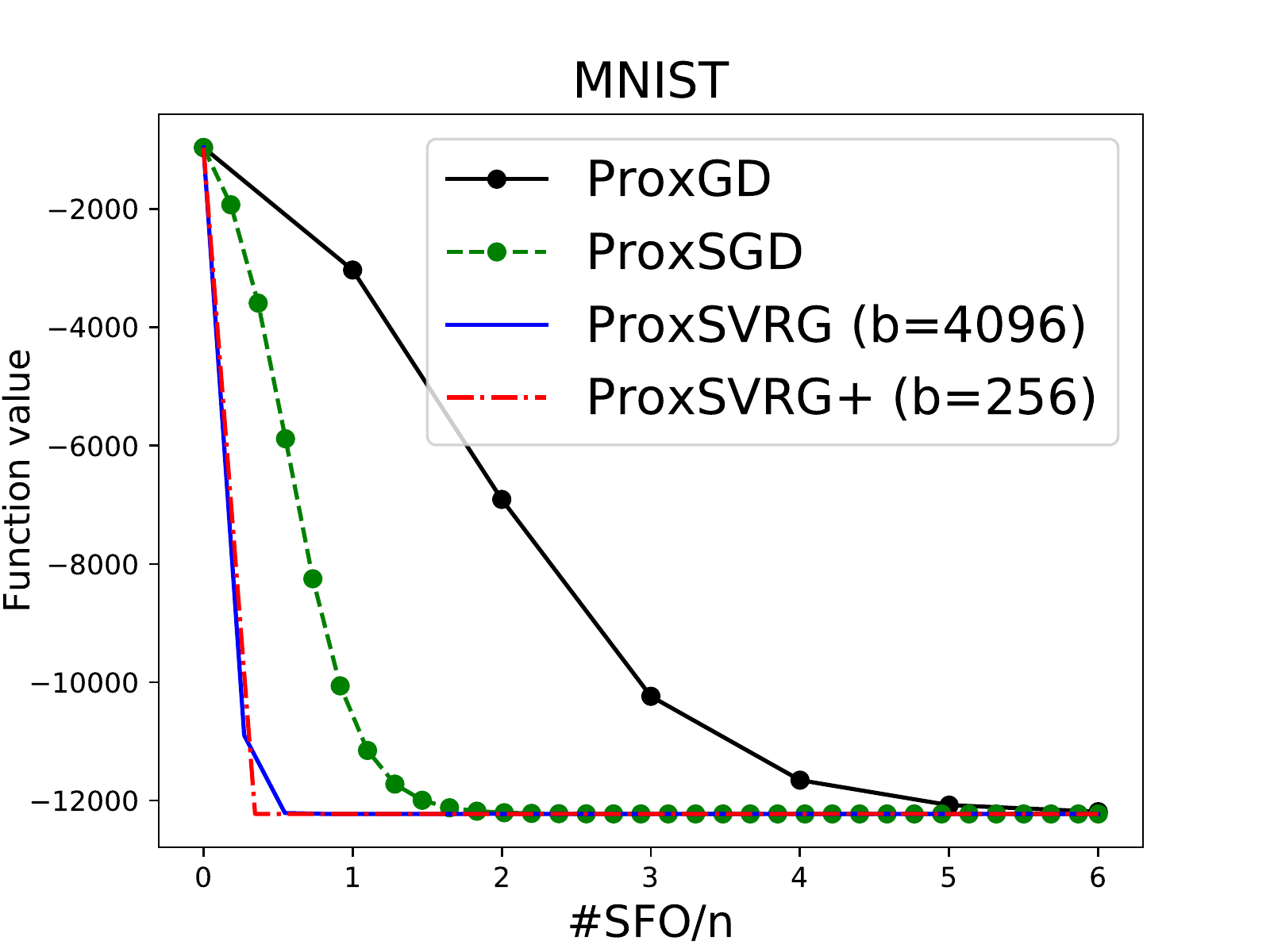}
        \vspace{-7mm}
        \caption{Under the best $b$}
        \label{fig:n5}
  \end{minipage}\vspace{-1mm}
\end{figure}

Figure \ref{fig:n4} demonstrates that our ProxSVRG+ prefers smaller minibatch sizes than ProxSVRG (see the curves with dots). Then, in Figure \ref{fig:n5}, we compare the algorithms with their corresponding best minibatch size $b$.

In conclusion, the experimental results are quite consistent with the theoretical results, i.e., different algorithms favor different minibatch sizes (see Figure \ref{fig:1}).
Concretely, our ProxSVRG+ achieves its best performance with a moderate minibatch size $b=256$ unlike ProxSVRG with $b=2048/4096$.
Besides, choosing $b=64$ is already good enough for ProxSVRG+ by  comparing the second column and last column of Figure \ref{fig:n3}, however ProxSVRG is only as good as ProxSGD with such a minibatch size.
Moreover, ProxSVRG+ uses much less proximal oracle calls than ProxSVRG if $b<n^{2/3}$ (see Figure \ref{fig:2}).
Note that small minibatch size also usually provides better generalization in practice.
Thus, we argue that our ProxSVRG+ might be more attractive in certain applications due to its moderate minibatch size.

\vspace{-3mm}
\section{Conclusion}\vspace{-2mm}
\label{sec:con}
In this paper, we propose a simple proximal stochastic method called ProxSVRG+ for nonsmooth nonconvex optimization.
We prove that ProxSVRG+ improves/generalizes several well-known convergence results (e.g., ProxGD, ProxSGD, ProxSVRG/SAGA and SCSG) by choosing proper minibatch sizes.
In particular, ProxSVRG+ is $\sqrt{b}$ (or $\sqrt{b}\epsilon n$ if $n>1/\epsilon$) times faster than ProxGD, which partially answers the open problem (i.e., developing stochastic methods with provably better performance than ProxGD with constant minibatch size $b$) proposed in \citep{reddi2016proximal}.
Also, ProxSVRG+ generalizes the results of SCSG \citep{lei2017non} to this nonsmooth nonconvex case, and it is more straightforward than SCSG and yields simpler analysis.
Moreover, for nonconvex functions satisfying Polyak-\L{}ojasiewicz condition,
we prove that ProxSVRG+ achieves the global linear convergence rate without restart.
As a result, ProxSVRG+ can \emph{automatically} switch to the faster linear convergence rate (i.e., $O(\log 1/\epsilon)$) from sublinear convergence rate (i.e., $O(1/\epsilon)$) in some regions (e.g., the neighborhood of a local minimum) as long as the objective function satisfies the PL condition locally in these regions.
This is impossible for ProxSVRG~\citep{reddi2016proximal} since it needs to be restarted $O(\log 1/\epsilon)$ times.

\vspace{-2mm}
\subsection*{Acknowledgments}\vspace{-1mm}
The authors would like to thank Rong Ge, Xiangliang Zhang and the anonymous reviewers for useful suggestions.

\newpage
\bibliographystyle{plainnat}
\bibliography{draft}

\newpage
\appendix

\section{Proofs for Nonconvex ProxSVRG+ Algorithm}
\label{app:proof}

In this appendix, we first provide the proof of Theorem \ref{thm:1} (Appendix \ref{app:proof1}).
Then we provide the proof for other choices of epoch length $m$ (Appendix \ref{app:epoch}).

\vspace{1mm}
\subsection{Proof of Theorem \ref{thm:1}}
\label{app:proof1}

Before proving Theorem \ref{thm:1}, we need a useful lemma for the proximal operator.

\begin{lemma}\label{lm:key}
Let $x^+ := \prox_{\eta h}(x-\eta v)$, then the following inequality holds:
\begin{equation}\label{eq:key}
\Phi(x^+)\leq \Phi(z) + \inner{\nabla f(x)-v}{x^+-z} -\frac{1}{\eta}\inner{x^+-x}{x^+-z}
              +\frac{L}{2}\ns{x^+-x}+\frac{L}{2}\ns{z-x}, ~~\forall z\in\R^d.
\end{equation}
\end{lemma}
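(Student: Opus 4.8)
The plan is to split $\Phi = f + h$ and bound the two pieces separately, then recombine. The convex nonsmooth part $h$ is controlled through the first-order optimality condition of the proximal step, while the (possibly nonconvex) smooth part $f$ is controlled by invoking the descent lemma twice. No convexity of $f$ is used anywhere.

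First I would exploit the definition $x^+ = \prox_{\eta h}(x - \eta v) = \arg\min_y\big(h(y) + \frac{1}{2\eta}\ns{y - x + \eta v}\big)$. Its first-order optimality condition produces a subgradient $\xi := -\frac{1}{\eta}(x^+ - x) - v \in \partial h(x^+)$. Convexity of $h$ then gives $h(x^+) \leq h(z) + \inner{\xi}{x^+ - z}$ for every $z$, and substituting the expression for $\xi$ yields
$$h(x^+) \leq h(z) - \frac{1}{\eta}\inner{x^+ - x}{x^+ - z} - \inner{v}{x^+ - z}.$$
This already accounts for the $-\frac{1}{\eta}\inner{x^+-x}{x^+-z}$ term and the $-v$ contribution of the target inequality.

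For the smooth part, the key observation is that although $f$ is nonconvex, $L$-smoothness still provides a two-sided descent estimate $\big|f(y) - f(x) - \inner{\nabla f(x)}{y-x}\big| \leq \frac{L}{2}\ns{y-x}$. I would apply it once with $y = x^+$ to upper bound $f(x^+)$ in terms of $f(x)$, and once in the reversed direction with $y = z$ to upper bound $f(x)$ in terms of $f(z)$, obtaining
$$f(x^+) \leq f(z) + \inner{\nabla f(x)}{x^+ - z} + \frac{L}{2}\ns{x^+ - x} + \frac{L}{2}\ns{z - x},$$
where the two gradient inner products $\inner{\nabla f(x)}{x^+ - x}$ and $-\inner{\nabla f(x)}{z - x}$ telescope into the single term $\inner{\nabla f(x)}{x^+ - z}$.

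Adding the two bounds and merging $\inner{\nabla f(x)}{x^+-z} - \inner{v}{x^+-z} = \inner{\nabla f(x) - v}{x^+ - z}$ reproduces \eqref{eq:key} exactly. The only genuinely delicate point is recognizing that convexity of $f$ is unavailable, so the estimate of $f(x)$ against $f(z)$ must be drawn from the descent lemma applied in the \emph{unfavorable} direction (an upper bound where a convex function would give the sharper first-order inequality), rather than from a convexity argument. The slack this introduces is precisely the $\frac{L}{2}\ns{z - x}$ term on the right-hand side, which explains its presence in the statement and is the one spot where the argument must be set up carefully.
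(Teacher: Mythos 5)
Your proposal is correct and follows essentially the same route as the paper: the subgradient from the proximal optimality condition combined with convexity of $h$, plus the descent lemma applied to $f$ at $x^+$ and in the reverse direction at $z$ (the paper phrases the latter as applying $L$-smoothness to $-f$, which is the same two-sided estimate you invoke). The algebra and the final recombination match the paper's proof exactly.
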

\begin{proof}
First, we recall the proximal operator (see (\ref{eq:prox2})):
\begin{equation}\label{eq:proxv}
\prox_{\eta h}(x-\eta v) := \arg\min_{y\in\R^d}{\Big(h(y)+\frac{1}{2\eta}\ns{y-x}+\inner{v}{y}\Big)}.
\end{equation}
For the nonsmooth function $h(x)$, we have
\begin{align}
h(x^+) &\leq h(z) + \inner{p}{x^+-z} \label{eq:h1}\\
       &= h(z) - \inner{v+\frac{1}{\eta}(x^+-x)}{x^+-z}, \label{eq:h2}
\end{align}
where $p\in\partial h(x^+)$ such that $p+\frac{1}{\eta}(x^+-x)+v=0$ according to the optimality condition of $(\ref{eq:proxv})$, and (\ref{eq:h1}) holds due to the convexity of $h$.

For the nonconvex function $f(x)$, we have
\begin{align}
f(x^+) &\leq f(x) + \inner{\nabla f(x)}{x^+-x} +\frac{L}{2}\ns{x^+-x} \label{eq:f1}\\
-f(z)  &\leq -f(x) + \inner{-\nabla f(x)}{z-x} +\frac{L}{2}\ns{z-x}, \label{eq:f2}
\end{align}
where (\ref{eq:f1}) holds since $f(x)$ has $L$-Lipschitz continuous gradient (see (\ref{eq:smooth})), and (\ref{eq:f2}) holds since $-f(x)$ has the same $L$-Lipschitz continuous gradient as $f(x)$.

This lemma is proved by adding (\ref{eq:h2}), (\ref{eq:f1}), (\ref{eq:f2}), and recalling $\Phi(x) = f(x) +h(x)$.
\end{proof}

\vspace{3mm}
\begin{proofof}{Theorem \ref{thm:1}}
Now, we are ready to use Lemma \ref{lm:key} to prove Theorem \ref{thm:1}.
Let $x_t^s := \prox_{\eta h}(x_{t-1}^s-\eta v_{t-1}^s)$ and $\bx_t^s := \prox_{\eta h}\big(x_{t-1}^s-\eta \nabla f(x_{t-1}^s)\big)$.
By letting $x^+=x_t^s, x=x_{t-1}^s, v=v_{t-1}^s$ and $z=\bx_t^s$ in (\ref{eq:key}), we have
\begin{equation}\label{eq:3}
\Phi(x_t^s)\leq \Phi(\bx_t^s) + \inner{\nabla f(x_{t-1}^s)-v_{t-1}^s}{x_t^s-\bx_t^s} -\frac{1}{\eta}\inner{x_t^s-x_{t-1}^s}{x_t^s-\bx_t^s} +\frac{L}{2}\ns{x_t^s-x_{t-1}^s}+\frac{L}{2}\ns{\bx_t^s-x_{t-1}^s}.
\end{equation}
Besides, by letting $x^+=\bx_t^s, x=x_{t-1}^s, v=\nabla f(x_{t-1}^s)$ and $z=x=x_{t-1}^s$ in (\ref{eq:key}), we have
\begin{align}
\Phi(\bx_t^s)\leq \Phi(x_{t-1}^s) -\frac{1}{\eta}\inner{\bx_t^s-x_{t-1}^s}{\bx_t^s-x_{t-1}^s}
              +\frac{L}{2}\ns{\bx_t^s-x_{t-1}^s}
              =  \Phi(x_{t-1}^s) - \Big(\frac{1}{\eta}-\frac{L}{2}\Big)\ns{\bx_t^s-x_{t-1}^s}. \label{eq:4}
\end{align}
We add (\ref{eq:3}) and (\ref{eq:4}) to obtain the key inequality
\begin{align}
  \Phi(x_t^s) &\leq \Phi(x_{t-1}^s) + \frac{L}{2}\ns{x_t^s-x_{t-1}^s} -\Big(\frac{1}{\eta}-L\Big)\ns{\bx_t^s-x_{t-1}^s}
  + \inner{\nabla f(x_{t-1}^s)-v_{t-1}^s}{x_t^s-\bx_t^s} \notag \\
  &\qquad \quad -\frac{1}{\eta}\inner{x_t^s-x_{t-1}^s}{x_t^s-\bx_t^s}   \notag\\
   &= \Phi(x_{t-1}^s) + \frac{L}{2}\ns{x_t^s-x_{t-1}^s} -\Big(\frac{1}{\eta}-L\Big)\ns{\bx_t^s-x_{t-1}^s}
   + \inner{\nabla f(x_{t-1}^s)-v_{t-1}^s}{x_t^s-\bx_t^s} \notag \\
  &\qquad \quad -\frac{1}{2\eta}\big(\ns{x_t^s-x_{t-1}^s}+\ns{x_t^s-\bx_t^s}-\ns{\bx_t^s-x_{t-1}^s}\big)\notag\\
  &= \Phi(x_{t-1}^s) -\Big(\frac{1}{2\eta}-\frac{L}{2}\Big)\ns{x_t^s-x_{t-1}^s} -\Big(\frac{1}{2\eta}-L\Big)\ns{\bx_t^s-x_{t-1}^s}
   + \inner{\nabla f(x_{t-1}^s)-v_{t-1}^s}{x_t^s-\bx_t^s} \notag \\
  &\qquad \quad -\frac{1}{2\eta}\ns{x_t^s-\bx_t^s} \notag\\
  &\leq \Phi(x_{t-1}^s) -\Big(\frac{1}{2\eta}-\frac{L}{2}\Big)\ns{x_t^s-x_{t-1}^s} -\Big(\frac{1}{2\eta}-L\Big)\ns{\bx_t^s-x_{t-1}^s}
   + \inner{\nabla f(x_{t-1}^s)-v_{t-1}^s}{x_t^s-\bx_t^s} \notag \\
  &\qquad \quad -\frac{1}{8\eta}\ns{x_t^s-x_{t-1}^s}+\frac{1}{6\eta}\ns{\bx_t^s-x_{t-1}^s} \label{eq:beta}\\
  &= \Phi(x_{t-1}^s) -\Big(\frac{5}{8\eta}-\frac{L}{2}\Big)\ns{x_t^s-x_{t-1}^s} -\Big(\frac{1}{3\eta}-L\Big)\ns{\bx_t^s-x_{t-1}^s}
   + \inner{\nabla f(x_{t-1}^s)-v_{t-1}^s}{x_t^s-\bx_t^s} \notag \\
   &\leq \Phi(x_{t-1}^s) -\Big(\frac{5}{8\eta}-\frac{L}{2}\Big)\ns{x_t^s-x_{t-1}^s} -\Big(\frac{1}{3\eta}-L\Big)\ns{\bx_t^s-x_{t-1}^s}
   + \eta\ns{\nabla f(x_{t-1}^s)-v_{t-1}^s},  \label{eq:cauchy}
\end{align}
where (\ref{eq:beta}) uses the following Young's inequality (choose $\alpha =3$)
\begin{equation}\label{eq:young}
 \ns{x_t^s-x_{t-1}^s}\leq \big(1+\frac{1}{\alpha}\big)\ns{\bx_t^s-x_{t-1}^s}
                              +(1+\alpha)\ns{x_t^s-\bx_t^s}, ~~\forall \alpha>0,
\end{equation}
and (\ref{eq:cauchy}) holds due to the following Lemma \ref{lm:2}.

\begin{lemma}\label{lm:2}
Let $x_t^s := \prox_{\eta h}(x_{t-1}^s-\eta v_{t-1}^s)$ and $\bx_t^s := \prox_{\eta h}\big(x_{t-1}^s-\eta \nabla f(x_{t-1}^s)\big)$. Then, the following inequality holds:
\begin{equation*}
\inner{\nabla f(x_{t-1}^s)-v_{t-1}^s}{x_t^s-\bx_t^s} \leq \eta\ns{\nabla f(x_{t-1}^s)-v_{t-1}^s}
\end{equation*}
\end{lemma}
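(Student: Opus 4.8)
The plan is to combine the Cauchy–Schwarz inequality with the nonexpansiveness of the proximal operator. First I would bound the inner product on the left by the product of norms,
$$\inner{\nabla f(x_{t-1}^s)-v_{t-1}^s}{x_t^s-\bx_t^s}\leq \n{\nabla f(x_{t-1}^s)-v_{t-1}^s}\cdot\n{x_t^s-\bx_t^s},$$
so that the entire lemma reduces to the single estimate $\n{x_t^s-\bx_t^s}\leq \eta\n{\nabla f(x_{t-1}^s)-v_{t-1}^s}$; multiplying the two factors then produces exactly $\eta\ns{\nabla f(x_{t-1}^s)-v_{t-1}^s}$.

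For that key estimate, the crucial observation is that $x_t^s$ and $\bx_t^s$ are the images of the two input points $x_{t-1}^s-\eta v_{t-1}^s$ and $x_{t-1}^s-\eta\nabla f(x_{t-1}^s)$ under the \emph{same} map $\prox_{\eta h}$. Since $h$ is convex, this proximal map is nonexpansive (1-Lipschitz), so
$$\n{x_t^s-\bx_t^s}=\n{\prox_{\eta h}(x_{t-1}^s-\eta v_{t-1}^s)-\prox_{\eta h}(x_{t-1}^s-\eta\nabla f(x_{t-1}^s))}\leq \n{(x_{t-1}^s-\eta v_{t-1}^s)-(x_{t-1}^s-\eta\nabla f(x_{t-1}^s))}=\eta\n{\nabla f(x_{t-1}^s)-v_{t-1}^s},$$
where the common shift $x_{t-1}^s$ cancels inside the norm, leaving only the $\eta$-scaled gradient difference. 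Chaining this with the Cauchy–Schwarz step above completes the argument.

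The only substantive point to justify is the nonexpansiveness of $\prox_{\eta h}$, which I would derive from the optimality conditions of the defining minimization (\ref{eq:proxv}). If $p$ and $q$ denote the two prox values with inputs $y$ and $z$, the optimality conditions give $y-p\in\eta\,\partial h(p)$ and $z-q\in\eta\,\partial h(q)$; monotonicity of the subdifferential of the convex function $h$ then yields $\inner{(y-p)-(z-q)}{p-q}\geq 0$, i.e. $\inner{y-z}{p-q}\geq\ns{p-q}$, and one more application of Cauchy–Schwarz gives $\n{p-q}\leq\n{y-z}$. I do not anticipate a genuine obstacle: the argument is short, and the nonexpansiveness of the resolvent of a convex subdifferential is standard. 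The only care needed is to treat $x_{t-1}^s$ as a shared shift so that it cancels cleanly inside the norm rather than being mishandled as part of either proximal argument.
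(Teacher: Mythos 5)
Your proposal is correct and follows essentially the same route as the paper: both reduce the claim via Cauchy--Schwarz to the bound $\n{x_t^s-\bx_t^s}\leq \eta\n{\nabla f(x_{t-1}^s)-v_{t-1}^s}$, and both obtain that bound from the optimality conditions of the two proximal subproblems together with convexity of $h$. The paper's step of adding inequalities (\ref{eq:hc1}) and (\ref{eq:hc2}) is exactly the monotonicity-of-$\partial h$ argument you use to justify nonexpansiveness of $\prox_{\eta h}$, so the two proofs coincide up to packaging.
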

\begin{proofof}{Lemma \ref{lm:2}}
First, we obtain the relation between $\n{x_t^s-\bx_t^s}$ and $\n{\nabla f(x_{t-1}^s)- v_{t-1}^s}$ as follows (similar to \citep{ghadimi2016mini}):
\begin{align}
 h(x_t^s) &\leq h(\bx_t^s) -
    \inner{v_{t-1}^s+\frac{1}{\eta}(x_t^s-x_{t-1}^s)}{x_t^s-\bx_t^s} \label{eq:hc1}\\
 h(\bx_t^s) &\leq h(x_t^s) -
    \inner{\nabla f(x_{t-1}^s)+\frac{1}{\eta}(\bx_t^s-x_{t-1}^s)}{\bx_t^s-x_t^s}, \label{eq:hc2}
\end{align}
where (\ref{eq:hc1}) and (\ref{eq:hc2}) hold due to (\ref{eq:h2}).
Adding (\ref{eq:hc1}) and (\ref{eq:hc2}), we have
\begin{align}
  \frac{1}{\eta}\inner{x_t^s-\bx_t^s}{x_t^s-\bx_t^s}
          &\leq  \inner{\nabla f(x_{t-1}^s)- v_{t-1}^s}{x_t^s-\bx_t^s}  \notag\\
   \frac{1}{\eta}\ns{x_t^s-\bx_t^s}
          &\leq  \n{\nabla f(x_{t-1}^s)- v_{t-1}^s}\n{x_t^s-\bx_t^s} \label{eq:hc4}\\
   \n{x_t^s-\bx_t^s} &\leq \eta\n{\nabla f(x_{t-1}^s)- v_{t-1}^s}, \label{eq:hc5}
\end{align}
where (\ref{eq:hc4}) uses Cauchy-Schwarz inequality.

Now, this lemma is proved by using Cauchy-Schwarz inequality and (\ref{eq:hc5}), i.e.,
$\inner{\nabla f(x_{t-1}^s)-v_{t-1}^s}{x_t^s-\bx_t^s}
\leq \n{\nabla f(x_{t-1}^s)- v_{t-1}^s}\n{x_t^s-\bx_t^s}
\leq \eta\ns{\nabla f(x_{t-1}^s)-v_{t-1}^s}.$
\end{proofof}

Note that $x_t^s = \prox_{\eta h}(x_{t-1}^s-\eta v_{t-1}^s)$ is the iterated form in our algorithm (see Line 7 in Algorithm \ref{alg:1}).
Now, we take expectations with all history for (\ref{eq:cauchy}).
\begin{equation}\label{eq:exp}
\E[\Phi(x_t^s)] \leq \E\Big[\Phi(x_{t-1}^s) -\Big(\frac{5}{8\eta}-\frac{L}{2}\Big)\ns{x_t^s-x_{t-1}^s} -\Big(\frac{1}{3\eta}-L\Big)\ns{\bx_t^s-x_{t-1}^s}
   + \eta\ns{\nabla f(x_{t-1}^s)-v_{t-1}^s}\Big]
\end{equation}

Then, we bound the variance term in (\ref{eq:exp}) as follows:
\begin{align}
 &\E\Big[\eta\ns{\nabla f(x_{t-1}^s)-v_{t-1}^s}\Big] \notag\\
 &= \E\Big[\eta\Big\|\frac{1}{b}\sum_{i\in I_b}\Big(\nabla f_i(x_{t-1}^s)-\nabla f_i(\tx^{s-1})\Big)
            -\big(\nabla f(x_{t-1}^s)-g^s\big)\Big\|^2\Big] \notag\\
 &=\E\Big[\eta\Big\|\frac{1}{b}\sum_{i\in I_b}\Big(\nabla f_i(x_{t-1}^s)-\nabla f_i(\tx^{s-1})\Big)
            -\Big(\nabla f(x_{t-1}^s)
            - \frac{1}{B}\sum_{j\in I_B}\nabla f_j(\tx^{s-1})\Big)\Big\|^2\Big] \notag\\
 &= \E\Big[\eta\Big\|\frac{1}{b}\sum_{i\in I_b}\Big(\nabla f_i(x_{t-1}^s)-\nabla f_i(\tx^{s-1})\Big)
            -\Big(\nabla f(x_{t-1}^s)-\nabla f(\tx^{s-1})\Big)
            +\Big(\frac{1}{B}\sum_{j\in I_B}\nabla f_j(\tx^{s-1})
            - \nabla f(\tx^{s-1})\Big)\Big\|^2\Big] \notag \\
 &= \eta\E\Big[\Big\|\frac{1}{b}
     \sum_{i\in I_b}\Big(\big(\nabla f_i(x_{t-1}^s)-\nabla f_i(\tx^{s-1})\big)
            -\big(\nabla f(x_{t-1}^s)-\nabla f(\tx^{s-1})\big)\Big)
     +\frac{1}{B}\sum_{j\in I_B}\Big(\nabla f_j(\tx^{s-1})- \nabla f(\tx^{s-1})\Big)
     \Big\|^2\Big] \notag \\
 &= \eta\E\Big[\Big\|\frac{1}{b}
     \sum_{i\in I_b}\Big(\big(\nabla f_i(x_{t-1}^s)-\nabla f_i(\tx^{s-1})\big)
            -\big(\nabla f(x_{t-1}^s)-\nabla f(\tx^{s-1})\big)\Big)\Big\|^2\Big] \notag\\
      &\qquad \qquad      +\eta\E\Big[\Big\|
     \frac{1}{B}\sum_{j\in I_B}\Big(\nabla f_j(\tx^{s-1})- \nabla f(\tx^{s-1})\Big)
     \Big\|^2\Big] \label{eq:ijind}\\
 &= \frac{\eta}{b^2}\E\Big[\sum_{i\in I_b}\Big\|
     \Big(\big(\nabla f_i(x_{t-1}^s)- \nabla f_i(\tx^{s-1})\big)
            -\big(\nabla f(x_{t-1}^s)-\nabla f(\tx^{s-1})\big)\Big)\Big\|^2\Big]\notag\\
      &\qquad \qquad      +\eta\E\Big[\Big\|
     \frac{1}{B}\sum_{j\in I_B}\Big(\nabla f_j(\tx^{s-1})- \nabla f(\tx^{s-1})\Big)
     \Big\|^2\Big] \label{eq:v1}\\
 &\leq \frac{\eta}{b^2}
         \E\Big[\sum_{i\in I_b}\big\|\nabla f_i(x_{t-1}^s)- \nabla f_i(\tx^{s-1}) \big\|^2\Big]
         +\eta\E\Big[\Big\|
          \frac{1}{B}\sum_{j\in I_B}\Big(\nabla f_j(\tx^{s-1})- \nabla f(\tx^{s-1})\Big)
         \Big\|^2\Big]  \label{eq:v2}\\
 &\leq \frac{\eta L^2}{b}\E[\ns{x_{t-1}^s-\tx^{s-1}}]
        +\frac{I\{B<n\}\eta\sigma^2}{B} \label{eq:useasp},
\end{align}
where the expectations are taking with $I_b$ and $I_B$.
(\ref{eq:ijind}) and (\ref{eq:v1}) hold since $\E[\ns{x_1+x_2+\cdots+x_k}]=\sum_{i=1}^k\E[\ns{x_i}]$ if $x_1, x_2, \ldots, x_k$ are independent and of mean zero (note that $I_b$ and $I_B$ are also independent).
(\ref{eq:v2}) uses the fact that $\E[\ns{x-\E[x]}]\leq \E[\ns{x}]$, for any random variable $x$.
(\ref{eq:useasp}) holds due to (\ref{eq:smooth}) and Assumption \ref{asp:var}.

Now, we plug (\ref{eq:useasp}) into (\ref{eq:exp}) to obtain
\begin{align}
&\E[\Phi(x_t^s)]  \notag\\
&\leq \E\Big[\Phi(x_{t-1}^s) -\Big(\frac{5}{8\eta}-\frac{L}{2}\Big)\ns{x_t^s-x_{t-1}^s}
   -\Big(\frac{1}{3\eta}-L\Big)\ns{\bx_t^s-x_{t-1}^s}
   + \frac{\eta L^2}{b}\ns{x_{t-1}^s-\tx^{s-1}}
   +\frac{I\{B<n\}\eta\sigma^2}{B}\Big] \label{eq:reuse}\\
&=\E\Big[\Phi(x_{t-1}^s)
   -\frac{13L}{4}\ns{x_t^s-x_{t-1}^s} -L\ns{\bx_t^s-x_{t-1}^s}
   + \frac{L}{6b}\ns{x_{t-1}^s-\tx^{s-1}}
   +\frac{I\{B<n\}\eta\sigma^2}{B}\Big] \label{eq:eta}\\
&=\E\Big[\Phi(x_{t-1}^s)
   -\frac{13L}{4}\ns{x_t^s-x_{t-1}^s} -\frac{1}{36L}\ns{\calG_\eta(x_{t-1}^s)}
   + \frac{L}{6b}\ns{x_{t-1}^s-\tx^{s-1}}
   +\frac{I\{B<n\}\eta\sigma^2}{B}\Big] \label{eq:xtog}\\
& \leq\E\Big[\Phi(x_{t-1}^s)
   -\frac{13L}{8t}\ns{x_t^s-\tx^{s-1}} -\frac{1}{36L}\ns{\calG_\eta(x_{t-1}^s)}
   + \Big(\frac{L}{6b}+\frac{13L}{8t-4}\Big)\ns{x_{t-1}^s-\tx^{s-1}}
   +\frac{I\{B<n\}\eta\sigma^2}{B}\Big], \label{eq:useyoung}
\end{align}
where (\ref{eq:eta}) uses $\eta = \frac{1}{6L}$, and (\ref{eq:xtog}) uses the definition of gradient mapping $\calG_\eta(x_{t-1}^s)$ (see (\ref{eq:gradmap})) and recall $\bx_t^s := \prox_{\eta h}\big(x_{t-1}^s-\eta \nabla f(x_{t-1}^s)\big)$. (\ref{eq:useyoung}) uses $\ns{x_t^s-\tx^{s-1}}\leq \big(1+\frac{1}{\alpha}\big)\ns{x_{t-1}^s-\tx^{s-1}}
+(1+\alpha)\ns{x_t^s-x_{t-1}^s}$ by choosing $\alpha=2t-1$.

Now, adding (\ref{eq:useyoung}) for all iterations $1\le t\le m$ in epoch $s$
and recalling that $x_m^s=\tx^{s}$ and $x_{0}^s = \tx^{s-1}$, we get
\begin{align}
 \E[\Phi(\tx^{s})]&\leq \E\Big[\Phi(\tx^{s-1})
   -\sum_{t=1}^{m}\frac{1}{36L}\ns{\calG_\eta(x_{t-1}^s)}
   -\sum_{t=1}^{m}\frac{13L}{8t}\ns{x_t^s-\tx^{s-1}}\notag\\
   &\qquad \qquad  +\sum_{t=1}^{m}\Big(\frac{L}{6b}+\frac{13L}{8t-4}\Big)\ns{x_{t-1}^s-\tx^{s-1}}
   +\sum_{t=1}^{m}\frac{I\{B<n\}\eta\sigma^2}{B}\Big] \notag\\
  &\leq \E\Big[\Phi(\tx^{s-1})
   -\sum_{t=1}^{m}\frac{1}{36L}\ns{\calG_\eta(x_{t-1}^s)}
   -\sum_{t=1}^{m-1}\frac{13L}{8t}\ns{x_t^s-\tx^{s-1}}\notag\\
   &\qquad \qquad  +\sum_{t=2}^{m}\Big(\frac{L}{6b}+\frac{13L}{8t-4}\Big)\ns{x_{t-1}^s-\tx^{s-1}}
   +\sum_{t=1}^{m}\frac{I\{B<n\}\eta\sigma^2}{B}\Big] \label{eq:pos}\\
  &= \E\Big[\Phi(\tx^{s-1})
   -\sum_{t=1}^{m}\frac{1}{36L}\ns{\calG_\eta(x_{t-1}^s)}
   -\sum_{t=1}^{m-1}\Big(\frac{13L}{8t}-\frac{L}{6b}-\frac{13L}{8t+4}\Big)\ns{x_{t}^s-\tx^{s-1}}
   +\sum_{t=1}^{m}\frac{I\{B<n\}\eta\sigma^2}{B}\Big] \notag\\
  &\leq \E\Big[\Phi(\tx^{s-1})
   -\sum_{t=1}^{m}\frac{1}{36L}\ns{\calG_\eta(x_{t-1}^s)}
   -\sum_{t=1}^{m-1}\Big(\frac{L}{2t^2}-\frac{L}{6b}\Big)\ns{x_{t}^s-\tx^{s-1}}
   +\sum_{t=1}^{m}\frac{I\{B<n\}\eta\sigma^2}{B}\Big] \notag\\
   &\leq \E\Big[\Phi(\tx^{s-1})
   -\sum_{t=1}^{m}\frac{1}{36L}\ns{\calG_\eta(x_{t-1}^s)}
   +\sum_{t=1}^{m}\frac{I\{B<n\}\eta\sigma^2}{B}\Big], \label{eq:final}
\end{align}
where (\ref{eq:pos}) holds since $\ns{\cdot}$ always be non-negative and $x_{0}^s = \tx^{s-1}$,
and (\ref{eq:final}) holds since $m=\sqrt{b}$.
Thus, $\frac{L}{2t^2}-\frac{L}{6b}\geq 0$ for all $1\leq t <m$.

Now, we sum up (\ref{eq:final}) for all epochs $1\le s\le S$ to finish the proof as follows:
\begin{align}
0\leq \E[\Phi(\tx^{S})-\Phi(x^*)] &\leq \E\Big[\Phi(\tx^{0})-\Phi(x^*)
       -\sum_{s=1}^{S}\sum_{t=1}^{m}\frac{1}{36L}\ns{\calG_\eta(x_{t-1}^s)}
       +\sum_{s=1}^{S}\sum_{t=1}^{m}\frac{I\{B<n\}\eta\sigma^2}{B}\Big] \notag\\
\E[\ns{\calG_\eta(\hx)}] &\leq \frac{36L\big(\Phi(x_0)-\Phi(x^*)\big)}{Sm}
       +\frac{I\{B<n\}36L\eta\sigma^2}{B}  \label{eq:plugeta}\\
       &=\frac{36L\big(\Phi(x_0)-\Phi(x^*)\big)}{Sm}
       +\frac{I\{B<n\}6\sigma^2}{B}=2\epsilon, \label{eq:addx}
\end{align}
where (\ref{eq:plugeta}) holds since $\hx$ is chosen uniformly randomly from $\{x_{t-1}^s\}_{t\in[m], s\in[S]}$, and (\ref{eq:addx}) uses $\eta = \frac{1}{6L}$.
Now, we obtain the total number of iterations $T=Sm=S\sqrt{b}=\frac{36L\big(\Phi(x_0)-\Phi(x^*)\big)}{\epsilon}$.
The number of PO calls equals to $T=Sm=\frac{36L\big(\Phi(x_0)-\Phi(x^*)\big)}{\epsilon}$.
The proof is finished since the number of SFO calls equals to $Sn+Smb=36L\big(\Phi(x_0)-\Phi(x^*)\big)\big(\frac{n}{\epsilon\sqrt{b}}+\frac{b}{\epsilon}\big)$
if $B=n$ (i.e., the second term in (\ref{eq:addx}) is 0 and thus Assumption \ref{asp:var} is not needed), or equals to
$SB+Smb=36L\big(\Phi(x_0)-\Phi(x^*)\big)\big(\frac{B}{\epsilon\sqrt{b}}+\frac{b}{\epsilon}\big)$
if $B<n$ (note that $\frac{I\{B<n\}6\sigma^2}{B}\leq \epsilon$ since $B\geq 6\sigma^2/\epsilon$).
\end{proofof}

\subsection{Other Choices of Epoch Length $m$}
\label{app:epoch}
In this section, we show that the similar convergence result (i.e., Theorem $\ref{thm:1}$) holds for other choices of epoch length $m \neq \sqrt{b}$. The difference is that we need to choose different step size $\eta$.
Now, we list the similar convergence result in the following theorem and then prove it.

\begin{theorem}\label{thm:m1}
Let step size $\eta=\min\{\frac{1}{6L}, \frac{\sqrt{b}}{6mL}\}$,
where $b$ is the minibatch size and $m$ is the epoch length.
Then $\hx$ returned by Algorithm \ref{alg:1} is an $\epsilon$-accurate solution for problem (\ref{eq:form}) (i.e., $\E[\ns{\calG_\eta(\hx)}]\leq \epsilon$).
We distinguish the following two cases:
\begin{enumerate}[1)]
  \item We let batch size $B=n$. The number of SFO calls is at most
\begin{equation*}
6\big(\Phi(x_0)-\Phi(x^*)\big)\Big(\frac{n}{\epsilon \eta m}+\frac{b}{\epsilon \eta}\Big).
\end{equation*}
  \item Under Assumption \ref{asp:var}, we let batch size $B=\min\{6\sigma^2/\epsilon,n\}$.
  The number of SFO calls is at most
\begin{equation*}
6\big(\Phi(x_0)-\Phi(x^*)\big)\Big(\frac{B}{\epsilon \eta m}+\frac{b}{\epsilon \eta}\Big).
\end{equation*}
\end{enumerate}
In both cases, the number of PO calls equals to the total number of iterations $T$ which is at most $\frac{6\big(\Phi(x_0)-\Phi(x^*)\big)}{\epsilon \eta}$.
\end{theorem}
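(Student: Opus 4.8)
The plan is to replay the proof of Theorem~\ref{thm:1} almost verbatim, but to keep the step size $\eta$ as a free parameter and only pin it down at the very end. The first observation is that the entire chain of inequalities from (\ref{eq:3}) through (\ref{eq:reuse}) never used the specific value $\eta=\frac{1}{6L}$: every coefficient there is already expressed in terms of $\eta$, and Lemma~\ref{lm:key}, Lemma~\ref{lm:2}, the Young's inequality (\ref{eq:young}) with $\alpha=3$, and the variance bound (\ref{eq:useasp}) all hold for arbitrary $\eta>0$. Thus for any $\eta$ one obtains
\begin{equation*}
\E[\Phi(x_t^s)] \leq \E\Big[\Phi(x_{t-1}^s) -\Big(\tfrac{5}{8\eta}-\tfrac{L}{2}\Big)\ns{x_t^s-x_{t-1}^s} -\Big(\tfrac{1}{3\eta}-L\Big)\ns{\bx_t^s-x_{t-1}^s} + \tfrac{\eta L^2}{b}\ns{x_{t-1}^s-\tx^{s-1}} +\tfrac{I\{B<n\}\eta\sigma^2}{B}\Big].
\end{equation*}

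Next I would rewrite the $\ns{\bx_t^s-x_{t-1}^s}$ term using $\bx_t^s-x_{t-1}^s=-\eta\,\calG_\eta(x_{t-1}^s)$, turning its coefficient into $-(\frac{\eta}{3}-L\eta^2)$. Because the theorem enforces $\eta\leq\frac{1}{6L}$ in both regimes, we have $L\eta\leq\frac16$ and hence $\frac{\eta}{3}-L\eta^2=\eta(\frac13-L\eta)\geq\frac{\eta}{6}>0$, so this is exactly the (negative) gradient-mapping energy we want to accumulate. I would then mimic (\ref{eq:useyoung}): apply Young's inequality with $\alpha=2t-1$ to split $\ns{x_t^s-x_{t-1}^s}$ into an $\ns{x_t^s-\tx^{s-1}}$ part (coefficient $-(\frac{5}{8\eta}-\frac{L}{2})\frac{1}{2t}$) and an $\ns{x_{t-1}^s-\tx^{s-1}}$ part (coefficient $+(\frac{5}{8\eta}-\frac{L}{2})\frac{1}{2t-1}$), then sum over $t=1,\dots,m$ within one epoch and telescope using $x_0^s=\tx^{s-1}$ and $x_m^s=\tx^s$.

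The crux, and the only place the new step size is needed, is the sign of the coefficient of $\ns{x_t^s-\tx^{s-1}}$ that survives telescoping, namely
\begin{equation*}
\Big(\tfrac{5}{8\eta}-\tfrac{L}{2}\Big)\Big(\tfrac{1}{2t}-\tfrac{1}{2t+1}\Big)-\tfrac{\eta L^2}{b}
=\Big(\tfrac{5}{8\eta}-\tfrac{L}{2}\Big)\tfrac{1}{2t(2t+1)}-\tfrac{\eta L^2}{b},\qquad 1\leq t<m,
\end{equation*}
which must be nonnegative so that these terms can be dropped. In Theorem~\ref{thm:1} this held only because $m=\sqrt b$ forced $t<\sqrt b$; for a general epoch length it can fail for large $t$, and the remedy is simply to shrink $\eta$. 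Using $L\leq\frac{1}{6\eta}$ gives $\frac{5}{8\eta}-\frac{L}{2}\geq\frac{13}{24\eta}$, and $2t(2t+1)\leq 6t^2<6m^2$, so the quantity above is at least $\frac{13}{144\eta m^2}-\frac{\eta L^2}{b}$, which is nonnegative exactly when $\eta\leq\frac{\sqrt{13}}{12}\frac{\sqrt b}{mL}$. The choice $\eta=\frac{\sqrt b}{6mL}$ (active when $m>\sqrt b$) comfortably satisfies this, while $\eta=\frac{1}{6L}$ (active when $m\leq\sqrt b$) reduces to the already-verified situation; hence $\eta=\min\{\frac{1}{6L},\frac{\sqrt b}{6mL}\}$ works in all cases. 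I expect checking this worst-case ($t=m-1$) inequality with explicit constants to be the only real obstacle; the rest is bookkeeping.

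Finally, after dropping the nonnegative terms and summing the resulting per-epoch inequality over all $S$ epochs, the $\Phi$ values telescope and I would arrive at
\begin{equation*}
\E[\ns{\calG_\eta(\hx)}]\leq\frac{\Phi(x_0)-\Phi(x^*)}{Sm\big(\frac{\eta}{3}-L\eta^2\big)}+\frac{I\{B<n\}\,\eta\sigma^2}{B\big(\frac{\eta}{3}-L\eta^2\big)}\leq\frac{6\big(\Phi(x_0)-\Phi(x^*)\big)}{Sm\,\eta}+\frac{6\,I\{B<n\}\sigma^2}{B},
\end{equation*}
using $\frac{\eta}{3}-L\eta^2\geq\frac{\eta}{6}$. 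Requiring each term to be at most $\epsilon$ yields the total number of iterations $T=Sm=\frac{6(\Phi(x_0)-\Phi(x^*))}{\epsilon\eta}$ (which is the PO count) together with the batch-size condition $B\geq 6\sigma^2/\epsilon$ in Case~2). Counting stochastic gradients as $S(B+mb)=\frac{T}{m}B+Tb=6(\Phi(x_0)-\Phi(x^*))\big(\frac{B}{\epsilon\eta m}+\frac{b}{\epsilon\eta}\big)$, with $B=n$ in Case~1), reproduces exactly the two claimed SFO bounds.
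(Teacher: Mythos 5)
Your proposal is correct and follows essentially the same route as the paper's own proof in Appendix~\ref{app:epoch}: reuse (\ref{eq:reuse}) with generic $\eta$, convert the $\ns{\bx_t^s-x_{t-1}^s}$ term to $\eta^2\ns{\calG_\eta(x_{t-1}^s)}$ with coefficient at least $\frac{\eta}{6}$, apply Young's inequality with $\alpha=2t-1$, and enforce nonnegativity of the surviving telescoped coefficient $\big(\frac{1}{2t}-\frac{1}{2t+1}\big)\big(\frac{5}{8\eta}-\frac{L}{2}\big)-\frac{\eta L^2}{b}$ via the worst case $t=m-1$, which is exactly where the paper invokes $\eta=\min\{\frac{1}{6L},\frac{\sqrt b}{6mL}\}$ (your explicit constant check, $\eta\le\frac{\sqrt{13}}{12}\frac{\sqrt b}{mL}$, fills in the detail the paper leaves implicit). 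The final summation, iteration count, and SFO/PO accounting match the paper's verbatim.
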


\begin{proof}
We recall the Inequality (\ref{eq:reuse}) in the proof of Theorem \ref{thm:1} as follows:
\begin{align}
&\E[\Phi(x_t^s)]  \notag\\
   &\leq \E\Big[\Phi(x_{t-1}^s) -\Big(\frac{5}{8\eta}-\frac{L}{2}\Big)\ns{x_t^s-x_{t-1}^s}
     -\Big(\frac{1}{3\eta}-L\Big)\ns{\bx_t^s-x_{t-1}^s}
     + \frac{\eta L^2}{b}\ns{x_{t-1}^s-\tx^{s-1}}
     +\frac{I\{B<n\}\eta\sigma^2}{B}\Big] \notag\\
   &=\E\Big[\Phi(x_{t-1}^s)
     -\Big(\frac{5}{8\eta}-\frac{L}{2}\Big)\ns{x_t^s-x_{t-1}^s} -\Big(\frac{1}{3\eta}-L\Big)\eta^2\ns{\calG_\eta(x_{t-1}^s)}
     + \frac{\eta L^2}{b}\ns{x_{t-1}^s-\tx^{s-1}}
     +\frac{I\{B<n\}\eta\sigma^2}{B}\Big] \label{eq:mmap}\\
   &\leq\E\Big[\Phi(x_{t-1}^s)
     -\Big(\frac{5}{8\eta}-\frac{L}{2}\Big)\ns{x_t^s-x_{t-1}^s} -\frac{\eta}{6}\ns{\calG_\eta(x_{t-1}^s)}
     + \frac{\eta L^2}{b}\ns{x_{t-1}^s-\tx^{s-1}}
     +\frac{I\{B<n\}\eta\sigma^2}{B}\Big] \label{eq:meta}\\
   & \leq\E\Big[\Phi(x_{t-1}^s)
     -\frac{1}{2t}\Big(\frac{5}{8\eta}-\frac{L}{2}\Big)\ns{x_t^s-\tx^{s-1}} -\frac{\eta}{6}\ns{\calG_\eta(x_{t-1}^s)} \notag\\
     &\qquad \qquad + \Big(\frac{\eta L^2}{b}
      +\frac{1}{2t-1}\big(\frac{5}{8\eta}-\frac{L}{2}\big)\Big)\ns{x_{t-1}^s-\tx^{s-1}}
      +\frac{I\{B<n\}\eta\sigma^2}{B}\Big], \label{eq:last}
\end{align}
where (\ref{eq:mmap}) uses the definition of gradient mapping $\calG_\eta(x_{t-1}^s)$ (see (\ref{eq:gradmap})) and recall $\bx_t^s := \prox_{\eta h}\big(x_{t-1}^s-\eta \nabla f(x_{t-1}^s)\big)$.
(\ref{eq:meta}) uses $\eta \leq \frac{1}{6L}$.
(\ref{eq:last}) uses $\ns{x_t^s-\tx^{s-1}}\leq \big(1+\frac{1}{\alpha}\big)\ns{x_{t-1}^s-\tx^{s-1}}
+(1+\alpha)\ns{x_t^s-x_{t-1}^s}$ by choosing $\alpha=2t-1$.

Now, the remaining proof is almost the same as that of Theorem \ref{thm:1}.
Adding $(\ref{eq:last})$ for all iterations $1\le t\le m$ in epoch $s$
and recalling that $x_m^s=\tx^{s}$ and $x_{0}^s = \tx^{s-1}$,
we have
\begin{align}
 &\E[\Phi(\tx^{s})] \notag\\
  &\leq \E\Big[\Phi(\tx^{s-1})
    -\sum_{t=1}^{m}\frac{\eta}{6}\ns{\calG_\eta(x_{t-1}^s)}
    -\sum_{t=1}^{m}\frac{1}{2t}\Big(\frac{5}{8\eta}-\frac{L}{2}\Big)\ns{x_t^s-\tx^{s-1}} \notag\\
    &\qquad \qquad +\sum_{t=1}^{m}\Big(\frac{\eta L^2}{b}
      +\frac{1}{2t-1}\big(\frac{5}{8\eta}-\frac{L}{2}\big)\Big)
      \ns{x_{t-1}^s-\tx^{s-1}}
      +\sum_{t=1}^{m}\frac{I\{B<n\}\eta\sigma^2}{B}\Big] \notag\\
  &\leq \E\Big[\Phi(\tx^{s-1})
    -\sum_{t=1}^{m}\frac{\eta}{6}\ns{\calG_\eta(x_{t-1}^s)}
    -\sum_{t=1}^{m-1}\frac{1}{2t}\Big(\frac{5}{8\eta}-\frac{L}{2}\Big)\ns{x_t^s-\tx^{s-1}} \notag\\
    &\qquad \qquad +\sum_{t=2}^{m}\Big(\frac{\eta L^2}{b}
      +\frac{1}{2t-1}\big(\frac{5}{8\eta}-\frac{L}{2}\big)\Big)
      \ns{x_{t-1}^s-\tx^{s-1}}
      +\sum_{t=1}^{m}\frac{I\{B<n\}\eta\sigma^2}{B}\Big] \label{eq:mpos}\\
  &= \E\Big[\Phi(\tx^{s-1})
    -\sum_{t=1}^{m}\frac{\eta}{6}\ns{\calG_\eta(x_{t-1}^s)}
    -\sum_{t=1}^{m-1}\Big(\big(\frac{1}{2t}-\frac{1}{2t+1}\big)
    \big(\frac{5}{8\eta}-\frac{L}{2}\big)-\frac{\eta L^2}{b}\Big)\ns{x_t^s-\tx^{s-1}}
    +\sum_{t=1}^{m}\frac{I\{B<n\}\eta\sigma^2}{B}\Big] \notag\\
  &\leq \E\Big[\Phi(\tx^{s-1})
    -\sum_{t=1}^{m}\frac{\eta}{6}\ns{\calG_\eta(x_{t-1}^s)}
    -\sum_{t=1}^{m-1}\Big(\frac{1}{6t^2}
    \big(\frac{5}{8\eta}-\frac{L}{2}\big)-\frac{\eta L^2}{b}\Big)\ns{x_t^s-\tx^{s-1}}
    +\sum_{t=1}^{m}\frac{I\{B<n\}\eta\sigma^2}{B}\Big] \notag\\
  &\leq \E\Big[\Phi(\tx^{s-1})
    -\sum_{t=1}^{m}\frac{\eta}{6}\ns{\calG_\eta(x_{t-1}^s)}
    +\sum_{t=1}^{m}\frac{I\{B<n\}\eta\sigma^2}{B}\Big], \label{eq:mfinal}
\end{align}
where (\ref{eq:mpos}) holds since $\ns{\cdot}$ always be non-negative and $x_{0}^s = \tx^{s-1}$,
and (\ref{eq:mfinal}) holds since it is sufficient to show that
$\big(\frac{1}{6m^2}\big(\frac{5}{8\eta}-\frac{L}{2}\big)-\frac{\eta L^2}{b}\big)\geq 0$.
This holds since $\eta=\min\{\frac{1}{6L}, \frac{\sqrt{b}}{6mL}\}$.

Now, we sum up (\ref{eq:mfinal}) for all epochs $1\le s\le S$ to finish the proof as follows:
\begin{align}
0\leq \E[\Phi(\tx^{S})-\Phi(x^*)] &\leq \E\Big[\Phi(\tx^{0})-\Phi(x^*)
       -\sum_{s=1}^{S}\sum_{t=1}^{m}\frac{\eta}{6}\ns{\calG_\eta(x_{t-1}^s)}
       +\sum_{s=1}^{S}\sum_{t=1}^{m}\frac{I\{B<n\}\eta\sigma^2}{B}\Big] \notag\\
  \E[\ns{\calG_\eta(\hx)}] &\leq \frac{6\big(\Phi(x_0)-\Phi(x^*)\big)}{\eta Sm}
       +\frac{I\{B<n\}6\sigma^2}{B}=2\epsilon, \label{eq:addx5}
\end{align}
where (\ref{eq:addx5}) holds since $\hx$ is chosen uniformly randomly from $\{x_{t-1}^s\}_{t\in[m], s\in[S]}$.
Now, we obtain the total number of iterations $T=Sm=\frac{6\big(\Phi(x_0)-\Phi(x^*)\big)}{\epsilon \eta}$.
The number of PO calls equals to $T=Sm=\frac{6\big(\Phi(x_0)-\Phi(x^*)\big)}{\epsilon \eta}$.
The proof is finished since the number of SFO calls equals to
$Sn+Smb=6\big(\Phi(x_0)-\Phi(x^*)\big)\big(\frac{n}{\epsilon \eta m}+\frac{b}{\epsilon \eta}\big)$
if $B=n$ (i.e., the second term in (\ref{eq:addx5}) is 0 and thus Assumption \ref{asp:var} is not needed), or equals to
$SB+Smb=6\big(\Phi(x_0)-\Phi(x^*)\big)\Big(\frac{B}{\epsilon \eta m}+\frac{b}{\epsilon \eta}\Big)$
if $B<n$ (note that $\frac{I\{B<n\}6\sigma^2}{B}\leq \epsilon$ since $B\geq 6\sigma^2/\epsilon$).
\end{proof}

\section{Proof for ProxSVRG+ Under PL Condition}

In this appendix, we first provide the proof of Theorem \ref{thm:pl1} under the PL condition with form (\ref{eq:gelpl}) (Appendix \ref{app:pl}).
Then we also provide the proof of Theorem \ref{thm:pl1} under the PL condition with form (\ref{eq:plreddi}) (Appendix \ref{app:pl2}).

\subsection{Proof Under PL Form (\ref{eq:gelpl})}
\label{app:pl}

\begin{proofof}{Theorem \ref{thm:pl1}}
First, we recall a key inequality (\ref{eq:useyoung}) from the proof of Theorem \ref{thm:1}, i.e.,
\begin{align}
&\E[\Phi(x_t^s)] \notag\\
   & \leq\E\Big[\Phi(x_{t-1}^s)
   -\frac{13L}{8t}\ns{x_t^s-\tx^{s-1}} -\frac{1}{36L}\ns{\calG_\eta(x_{t-1}^s)}
   + \Big(\frac{L}{6b}+\frac{13L}{8t-4}\Big)\ns{x_{t-1}^s-\tx^{s-1}}
   +\frac{I\{B<n\}\eta\sigma^2}{B}\Big]. \label{eq:plfirst}
\end{align}

Then, we plug the following PL inequality (see (\ref{eq:gelpl}))
\begin{equation*}
\ns{\calG_\eta(x_{t-1}^s)} \geq 2\mu (\Phi(x_{t-1}^s)-\Phi^*)
\end{equation*}
into (\ref{eq:plfirst}) to get
\begin{align}
&\E[\Phi(x_t^s)] \notag\\
   & \leq\E\Big[\Phi(x_{t-1}^s)
   -\frac{13L}{8t}\ns{x_t^s-\tx^{s-1}} -\frac{\mu}{18L}(\Phi(x_{t-1}^s)-\Phi^*)
   + \Big(\frac{L}{6b}+\frac{13L}{8t-4}\Big)\ns{x_{t-1}^s-\tx^{s-1}}
   +\frac{I\{B<n\}\eta\sigma^2}{B}\Big].  \notag\\
\end{align}
Then, we obtain
\begin{align}
&\E[\Phi(x_t^s)-\Phi^*] \notag\\
   & \leq\E\Big[\Big(1-\frac{\mu}{18L}\Big)\big(\Phi(x_{t-1}^s)-\Phi^*\big)
   -\frac{13L}{8t}\ns{x_t^s-\tx^{s-1}}
   + \Big(\frac{L}{6b}+\frac{13L}{8t-4}\Big)\ns{x_{t-1}^s-\tx^{s-1}}
   +\frac{I\{B<n\}\eta\sigma^2}{B}\Big].  \label{eq:plit}
\end{align}
Let $\alpha := 1-\frac{\mu}{18L}$ and $\Psi_t^s := \frac{\E[\Phi(x_t^s)-\Phi^*]}{\alpha^t}$.
Plugging them into (\ref{eq:plit}), we have
\begin{align}
\Psi_t^s
   & \leq \Psi_{t-1}^s
   -\E\Big[\frac{13L}{8t\alpha^t}\ns{x_t^s-\tx^{s-1}}
    -\frac{1}{\alpha^t}\Big(\frac{L}{6b}+\frac{13L}{8t-4}\Big)\ns{x_{t-1}^s-\tx^{s-1}}
    -\frac{1}{\alpha^t}\frac{I\{B<n\}\eta\sigma^2}{B}\Big]. \label{eq:pl5}
\end{align}
Now, adding (\ref{eq:pl5}) from all iterations $1\leq t \leq m$ in epoch $s$
and recalling that $x_m^s=\tx^{s}$ and $x_{0}^s = \tx^{s-1}$, we have
\begin{align}
&\E[\Phi(\tx^s)-\Phi^*] \notag\\
   & \leq \alpha^m\E[\Phi(\tx^{s-1})-\Phi^*]
     +\alpha^m\sum_{t=1}^{m}\frac{1}{\alpha^t}\frac{I\{B<n\}\eta\sigma^2}{B}\notag\\
     &\qquad \qquad -\alpha^m\E\Big[\sum_{t=1}^{m}\frac{13L}{8t\alpha^t}\ns{x_t^s-\tx^{s-1}}
      -\sum_{t=1}^{m}\frac{1}{\alpha^t}\Big(\frac{L}{6b}+\frac{13L}{8t-4}\Big)
      \ns{x_{t-1}^s-\tx^{s-1}}\Big] \notag \\
   & = \alpha^m\E[\Phi(\tx^{s-1})-\Phi^*]
     +\frac{1-\alpha^m}{1-\alpha}\frac{I\{B<n\}\eta\sigma^2}{B}\notag\\
     &\qquad \qquad -\alpha^m\E\Big[\sum_{t=1}^{m}\frac{13L}{8t\alpha^t}\ns{x_t^s-\tx^{s-1}}
      -\sum_{t=1}^{m}\frac{1}{\alpha^t}\Big(\frac{L}{6b}+\frac{13L}{8t-4}\Big)
      \ns{x_{t-1}^s-\tx^{s-1}}\Big] \notag \\
   & \leq \alpha^m\E[\Phi(\tx^{s-1})-\Phi^*]
     +\frac{1-\alpha^m}{1-\alpha}\frac{I\{B<n\}\eta\sigma^2}{B}\notag\\
     &\qquad \qquad -\alpha^m\E\Big[\sum_{t=1}^{m-1}\frac{13L}{8t\alpha^t}\ns{x_t^s-\tx^{s-1}}
       -\sum_{t=2}^{m}\frac{1}{\alpha^t}\Big(\frac{L}{6b}+\frac{13L}{8t-4}\Big)
       \ns{x_{t-1}^s-\tx^{s-1}}\Big] \label{eq:plt1}\\
   & = \alpha^m\E[\Phi(\tx^{s-1})-\Phi^*]
       +\frac{1-\alpha^m}{1-\alpha}\frac{I\{B<n\}\eta\sigma^2}{B}  \notag\\
       &\qquad \qquad -\alpha^m\E\Big[\sum_{t=1}^{m-1}\frac{1}{\alpha^{t+1}}
       \Big(\frac{13L\alpha}{8t}-\frac{L}{6b}-\frac{13L}{8t+4}\Big)
        \ns{x_t^s-\tx^{s-1}}\Big] \notag\\
   & \leq \alpha^m\E[\Phi(\tx^{s-1})-\Phi^*]
       +\frac{1-\alpha^m}{1-\alpha}\frac{I\{B<n\}\eta\sigma^2}{B}\notag\\
       &\qquad \qquad -\alpha^m\E\Big[\sum_{t=1}^{m-1}\frac{1}{\alpha^{t+1}}
        \Big(\frac{13L}{8t}\big(1-\frac{1}{18\sqrt{n}}\big)-\frac{L}{6b}-\frac{13L}{8t+4}\Big)
         \ns{x_t^s-\tx^{s-1}}\Big] \label{eq:plt3}\\
   & \leq \alpha^m\E[\Phi(\tx^{s-1})-\Phi^*]
        +\frac{1-\alpha^m}{1-\alpha}\frac{I\{B<n\}\eta\sigma^2}{B}
         -\alpha^m\E\Big[\sum_{t=1}^{m-1}\frac{L}{\alpha^{t+1}}
          \Big(\frac{1}{2t^2}-\frac{13}{8t}\frac{1}{18\sqrt{n}}-\frac{1}{6b}\Big)
          \ns{x_t^s-\tx^{s-1}}\Big] \notag\\
   & \leq \alpha^m\E[\Phi(\tx^{s-1})-\Phi^*]
       +\frac{1-\alpha^m}{1-\alpha}\frac{I\{B<n\}\eta\sigma^2}{B}
       -\alpha^m\E\Big[\sum_{t=1}^{m-1}\frac{L}{\alpha^{t+1}}
       \Big(\frac{1}{2t^2}-\frac{1}{8\sqrt{n}t}-\frac{1}{6b}\Big)
       \ns{x_t^s-\tx^{s-1}}\Big] \notag\\
    & \leq \alpha^m\E[\Phi(\tx^{s-1})-\Phi^*]
       +\frac{1-\alpha^m}{1-\alpha}\frac{I\{B<n\}\eta\sigma^2}{B}, \label{eq:pllast}
\end{align}
where (\ref{eq:plt1}) holds since $\ns{\cdot}$ always be non-negative and $x_{0}^s = \tx^{s-1}$.
(\ref{eq:plt3}) holds since $\alpha = 1-\frac{\mu}{18L}$ and the assumption $L/\mu > \sqrt{n}$.
(\ref{eq:pllast}) holds since it is sufficient to show that
$\Gamma_t\geq 0$ for all $1\le t<m$, where $\Gamma_t=\frac{1}{2t^2}-\frac{1}{8\sqrt{n}t}-\frac{1}{6b}$.
Taking a derivative for $\Gamma_t$, we get
$\Gamma_t'=-\frac{1}{t^3}+\frac{1}{8\sqrt{n}t^2}=-\frac{8\sqrt{n}-t}{8\sqrt{n}t^3}<0$ since $t<m=\sqrt{b}\le\sqrt{n}$ (note that for other choices of epoch length $m$, the proof is almost the same as that in Appendix \ref{app:epoch}).
Thus, $\Gamma_t$ decreases in $t$.
We only need to show that $\Gamma_m=\Gamma_{\sqrt{b}}\geq 0$, i.e.,
$\frac{1}{2b}-\frac{1}{8\sqrt{nb}}-\frac{1}{6b} = \frac{1}{3b} - \frac{1}{8\sqrt{nb}}\geq 0$.
It is easy to see that this inequality holds since $b\leq n$.

Similarly, let $\widetilde{\alpha}:=\alpha^m$ and $\widetilde{\Psi}^s := \frac{\E[\Phi(\tx^s)-\Phi^*]}{\widetilde{\alpha}^s}$.
Plugging them into (\ref{eq:pllast}), we have
\begin{align}
\widetilde{\Psi}^s
   & \leq \widetilde{\Psi}^{s-1}
    -\frac{1}{\widetilde{\alpha}^s}\frac{1-\widetilde{\alpha}}{1-\alpha}\frac{I\{B<n\}\eta\sigma^2}{B}. \label{eq:pllast1}
\end{align}
Now, we sum up (\ref{eq:pllast1}) for all epochs $1\le s\le S$ to finish the proof as follows:
\begin{align}
\E[\Phi(\tx^S)-\Phi^*] &\leq \widetilde{\alpha}^S\E[\Phi(\tx^{0})-\Phi^*]
       +\widetilde{\alpha}^S\sum_{s=1}^{S}\frac{1}{\widetilde{\alpha}^s}
         \frac{1-\widetilde{\alpha}}{1-\alpha}\frac{I\{B<n\}\eta\sigma^2}{B} \notag\\
   &=\alpha^{Sm}\E[\Phi(\tx^{0})-\Phi^*]
     +\frac{1-\widetilde{\alpha}^S}{1-\widetilde{\alpha}}\frac{1-\widetilde{\alpha}}{1-\alpha}
     \frac{I\{B<n\}\eta\sigma^2}{B} \notag\\
   &\leq \alpha^{Sm}\E[\Phi(\tx^{0})-\Phi^*]
     +\frac{1}{1-\alpha}\frac{I\{B<n\}\eta\sigma^2}{B} \notag\\
   &=\Big(1-\frac{\mu}{18L}\Big)^{Sm}\big(\Phi(x_0)-\Phi^*\big)
      +\frac{I\{B<n\}18L\eta\sigma^2}{\mu B} \label{eq:plpluga}\\
   &=\Big(1-\frac{\mu}{18L}\Big)^{Sm}\big(\Phi(x_0)-\Phi^*\big)
      +\frac{I\{B<n\}3\sigma^2}{\mu B}
      =2\epsilon, \label{eq:plsm}
\end{align}
where (\ref{eq:plpluga}) holds sine $\alpha=1-\frac{\mu}{18L}$, and (\ref{eq:plsm}) uses $\eta = \frac{1}{6L}$.

From (\ref{eq:plsm}), we obtain the total number of iterations $T=Sm=S\sqrt{b}=O(\frac{1}{\mu}\log\frac{1}{\epsilon})$.
The number of PO calls equals to $T=Sm=O(\frac{1}{\mu}\log\frac{1}{\epsilon})$.
The number of SFO calls equals to $Sn+Smb=O\big(\frac{n}{\mu\sqrt{b}}\log\frac{1}{\epsilon}
+\frac{b}{\mu}\log\frac{1}{\epsilon}\big)$
if $B=n$ (i.e., the second term in (\ref{eq:plsm}) is 0 and thus Assumption \ref{asp:var} is not needed), or equals to
$SB+Smb=O\big(\frac{B}{\mu\sqrt{b}}\log\frac{1}{\epsilon}
+\frac{b}{\mu}\log\frac{1}{\epsilon}\big)$
if $B<n$ (note that $\frac{I\{B<n\}3\sigma^2}{\mu B}\leq \epsilon$ since $B\geq 6\sigma^2/\mu\epsilon$).
\end{proofof}

\subsection{Proof Under Form (\ref{eq:plreddi})}
\label{app:pl2}

\begin{proofof}{Theorem \ref{thm:pl1}}
First, similar to \citep{reddi2016proximal}, we need the following inequality:
\begin{align}
\Phi(\bx_t^s) &= f(\bx_t^s) + h(\bx_t^s) + h(x_{t-1}^s) -h(x_{t-1}^s) \notag\\
        &\leq f(x_{t-1}^s)+ \inner{\nabla f(x_{t-1}^s)}{\bx_t^s - x_{t-1}^s}
        +\frac{L}{2}\ns{\bx_t^s - x_{t-1}^s} + h(\bx_t^s) + h(x_{t-1}^s) -h(x_{t-1}^s) \label{eq:pl2:1}\\
        &= \Phi(x_{t-1}^s)+ \inner{\nabla f(x_{t-1}^s)}{\bx_t^s - x_{t-1}^s}
        +\frac{L}{2}\ns{\bx_t^s - x_{t-1}^s} + h(\bx_t^s) -h(x_{t-1}^s) \notag\\
        &\leq \Phi(x_{t-1}^s)+ \inner{\nabla f(x_{t-1}^s)}{\bx_t^s - x_{t-1}^s}
        +\frac{1}{2\eta}\ns{\bx_t^s - x_{t-1}^s} + h(\bx_t^s) -h(x_{t-1}^s) \label{eq:pl2:2}\\
        &= \Phi(x_{t-1}^s) - \frac{\eta}{2}D_h(x_{t-1}^s,\frac{1}{\eta})  \label{eq:pl2:3}\\
        &\leq \Phi(x_{t-1}^s) - \eta\mu(\Phi(x_{t-1}^s)-\Phi^*),   \label{eq:pl2:4}
\end{align}
where (\ref{eq:pl2:1}) holds since $f$ has $L$-Lipschitz continuous gradient,
(\ref{eq:pl2:2}) holds due to $\eta=\frac{1}{6L}<\frac{1}{L}$,
(\ref{eq:pl2:3}) follows from the definition of $D_h$ and recall $\bx_t^s := \prox_{\eta h}\big(x_{t-1}^s-\eta \nabla f(x_{t-1}^s)\big)$,
and (\ref{eq:pl2:4}) follows from the definition of PL condition with form (\ref{eq:plreddi}).

Then, adding $\frac{9}{11}$ times (\ref{eq:4}) and $\frac{2}{11}$ times (\ref{eq:pl2:4}), we have
\begin{align}
\Phi(\bx_t^s) &\leq  \Phi(x_{t-1}^s)
                  -\frac{9}{11}\Big(\frac{1}{\eta}-\frac{L}{2}\Big)\ns{\bx_t^s-x_{t-1}^s}
                  -\frac{2}{11}\eta\mu(\Phi(x_{t-1}^s)-\Phi^*) \notag\\
             &=\Phi(x_{t-1}^s)
                  -\Big(\frac{9}{11\eta}-\frac{9L}{22}\Big)\ns{\bx_t^s-x_{t-1}^s}
                  -\frac{2\eta\mu}{11}(\Phi(x_{t-1}^s)-\Phi^*).   \label{eq:pl2:5}
\end{align}
We add (\ref{eq:pl2:5}) and (\ref{eq:3}) to obtain the following inequality:
\begin{align}
  \Phi(x_t^s) &\leq \Phi(x_{t-1}^s) + \frac{L}{2}\ns{x_t^s-x_{t-1}^s}
              -\Big(\frac{9}{11\eta}-\frac{9L}{22}-\frac{L}{2}\Big)\ns{\bx_t^s-x_{t-1}^s}
              -\frac{2\eta\mu}{11}(\Phi(x_{t-1}^s)-\Phi^*) \notag \\
           &\qquad \quad -\frac{1}{\eta}\inner{x_t^s-x_{t-1}^s}{x_t^s-\bx_t^s}
                   + \inner{\nabla f(x_{t-1}^s)-v_{t-1}^s}{x_t^s-\bx_t^s} \notag\\
  &= \Phi(x_{t-1}^s) + \frac{L}{2}\ns{x_t^s-x_{t-1}^s}
         -\Big(\frac{9}{11\eta}-\frac{9L}{22}-\frac{L}{2}\Big)\ns{\bx_t^s-x_{t-1}^s}
         -\frac{2\eta\mu}{11}(\Phi(x_{t-1}^s)-\Phi^*) \notag \\
     &\qquad \quad
        -\frac{1}{2\eta}\big(\ns{x_t^s-x_{t-1}^s}+\ns{x_t^s-\bx_t^s}-\ns{\bx_t^s-x_{t-1}^s}\big)
        + \inner{\nabla f(x_{t-1}^s)-v_{t-1}^s}{x_t^s-\bx_t^s} \notag\\
  &= \Phi(x_{t-1}^s) -\Big(\frac{1}{2\eta}-\frac{L}{2}\Big)\ns{x_t^s-x_{t-1}^s}
          -\Big(\frac{7}{22\eta}-\frac{10L}{11}\Big)\ns{\bx_t^s-x_{t-1}^s}
          -\frac{2\eta\mu}{11}(\Phi(x_{t-1}^s)-\Phi^*) \notag \\
       &\qquad \quad -\frac{1}{2\eta}\ns{x_t^s-\bx_t^s}
           + \inner{\nabla f(x_{t-1}^s)-v_{t-1}^s}{x_t^s-\bx_t^s} \notag\\
  &\leq \Phi(x_{t-1}^s) -\Big(\frac{1}{2\eta}-\frac{L}{2}\Big)\ns{x_t^s-x_{t-1}^s}
          -\Big(\frac{7}{22\eta}-\frac{10L}{11}\Big)\ns{\bx_t^s-x_{t-1}^s}
          -\frac{2\eta\mu}{11}(\Phi(x_{t-1}^s)-\Phi^*) \notag \\
       &\qquad \quad -\frac{1}{8\eta}\ns{x_t^s-x_{t-1}^s}+\frac{1}{6\eta}\ns{\bx_t^s-x_{t-1}^s}
           + \inner{\nabla f(x_{t-1}^s)-v_{t-1}^s}{x_t^s-\bx_t^s} \label{eq:pl2:beta}\\
  &= \Phi(x_{t-1}^s) -\Big(\frac{5}{8\eta}-\frac{L}{2}\Big)\ns{x_t^s-x_{t-1}^s}
          -\Big(\frac{5}{33\eta}-\frac{10L}{11}\Big)\ns{\bx_t^s-x_{t-1}^s}
          -\frac{2\eta\mu}{11}(\Phi(x_{t-1}^s)-\Phi^*) \notag \\
       &\qquad \quad
           + \inner{\nabla f(x_{t-1}^s)-v_{t-1}^s}{x_t^s-\bx_t^s} \notag\\
  &\leq \Phi(x_{t-1}^s) -\Big(\frac{5}{8\eta}-\frac{L}{2}\Big)\ns{x_t^s-x_{t-1}^s}
          -\Big(\frac{5}{33\eta}-\frac{10L}{11}\Big)\ns{\bx_t^s-x_{t-1}^s}
          -\frac{2\eta\mu}{11}(\Phi(x_{t-1}^s)-\Phi^*) \notag \\
       &\qquad \quad
           + \eta\ns{\nabla f(x_{t-1}^s)-v_{t-1}^s}.  \label{eq:pl2:cauchy}
\end{align}
In the same way as (\ref{eq:beta}) and (\ref{eq:cauchy}), (\ref{eq:pl2:beta}) uses Young's inequality (\ref{eq:young}) (choose $\alpha =3$)
and (\ref{eq:pl2:cauchy}) follows from Lemma \ref{lm:2}.

Now, we take expectations for (\ref{eq:pl2:cauchy}) and then plug the variance bound (\ref{eq:useasp}) into it to obtain
\begin{align}
&\E[\Phi(x_t^s)]  \notag\\
&\leq\E\Big[\Phi(x_{t-1}^s) -\Big(\frac{5}{8\eta}-\frac{L}{2}\Big)\ns{x_t^s-x_{t-1}^s}
          -\Big(\frac{5}{33\eta}-\frac{10L}{11}\Big)\ns{\bx_t^s-x_{t-1}^s}
          -\frac{2\eta\mu}{11}(\Phi(x_{t-1}^s)-\Phi^*) \notag \\
       &\qquad \quad
         + \frac{\eta L^2}{b}\ns{x_{t-1}^s-\tx^{s-1}}
         +\frac{I\{B<n\}\eta\sigma^2}{B}\Big] \notag\\
&=\E\Big[\Phi(x_{t-1}^s)
   -\frac{13L}{4}\ns{x_t^s-x_{t-1}^s} -\frac{\mu}{33L}(\Phi(x_{t-1}^s)-\Phi^*)
   + \frac{L}{6b}\ns{x_{t-1}^s-\tx^{s-1}}
   +\frac{I\{B<n\}\eta\sigma^2}{B}\Big] \label{eq:pl2:eta}\\
& \leq\E\Big[\Phi(x_{t-1}^s)
   -\frac{13L}{8t}\ns{x_t^s-\tx^{s-1}} -\frac{\mu}{33L}(\Phi(x_{t-1}^s)-\Phi^*)
   + \Big(\frac{L}{6b}+\frac{13L}{8t-4}\Big)\ns{x_{t-1}^s-\tx^{s-1}}
   +\frac{I\{B<n\}\eta\sigma^2}{B}\Big], \label{eq:pl2:useyoung}
\end{align}
where \eqref{eq:pl2:eta} uses $\eta = \frac{1}{6L}$, and (\ref{eq:pl2:useyoung}) uses Young's inequality $\ns{x_t^s-\tx^{s-1}}\leq \big(1+\frac{1}{\alpha}\big)\ns{x_{t-1}^s-\tx^{s-1}}
+(1+\alpha)\ns{x_t^s-x_{t-1}^s}$ by choosing $\alpha=2t-1$.

Now, according to (\ref{eq:pl2:useyoung}), we obtain the following key inequality
\begin{align}
&\E[\Phi(x_t^s)-\Phi^*] \notag\\
   & \leq\E\Big[\Big(1-\frac{\mu}{33L}\Big)\big(\Phi(x_{t-1}^s)-\Phi^*\big)
   -\frac{13L}{8t}\ns{x_t^s-\tx^{s-1}}
   + \Big(\frac{L}{6b}+\frac{13L}{8t-4}\Big)\ns{x_{t-1}^s-\tx^{s-1}}
   +\frac{I\{B<n\}\eta\sigma^2}{B}\Big].  \label{eq:pl2:plit}
\end{align}
The remaining proof is exactly the same as our proof in Appendix \ref{app:pl} from (\ref{eq:plit}) to the end.
\end{proofof}

\end{document}